\documentclass[12pt,letterpaper]{article}

\usepackage{amsfonts,amssymb,latexsym,amsthm, amsmath,color,tikz}

\newcommand{\overbar}[1]{\mkern 1.5mu\overline{\mkern-1.5mu#1\mkern-1.5mu}\mkern 1.5mu}
\newcommand\res{ \mathord{\upharpoonright}}
\setlength{\footskip}{90pt}

\DeclareMathOperator{\dom}{dom} \DeclareMathOperator{\rng}{rng}

\def\taui{\tau_{\scriptscriptstyle \mathcal{I}}}
\def\seq{{\omega}^{<\omega}}

\def\upn{\mathord{\uparrow}}

\newtheorem{theorem}{Theorem}[section]
\newtheorem{lemma}[theorem]{Lemma}
\newtheorem{corollary}[theorem]{Corollary}
\newtheorem{question}[theorem]{Question}

\theoremstyle{definition}
\newtheorem{definition}[theorem]{Definition}
\newtheorem{remark}[theorem]{Remark}
\newtheorem{example}[theorem]{Example}

\begin{document}

\author{Ramiro de la Vega\thanks{Universidad de los Andes, Bogot\'a, Colombia, rade@uniandes.edu.co}, Javier Murgas\thanks{javier\_murgas@hotmail.com}, Carlos Uzc\'ategui\thanks{Escuela de Matem\'aticas, Universidad Industrial de Santander, Bucaramanga, Colombia, cuzcatea@saber.uis.edu.co}}

\title{Selective separability and $q^+$ on maximal spaces}

\maketitle

\begin{abstract}
Given a hereditarily meager ideal $\mathcal{I}$ on a countable set
$X$ we use Martin's axiom for countable posets to produce a
zero-dimensional maximal topology $\tau^\mathcal{I}$ on $X$ such
that $\tau^\mathcal{I}\cap \mathcal{I}=\{\emptyset\}$ and,
moreover, if $\mathcal{I}$ is $p^+$ then $\tau^\mathcal{I}$ is
selectively separable (SS) and if $\mathcal{I}$ is $q^+$, so is
$\tau^\mathcal{I}$. In particular,  we obtain regular maximal
spaces satisfying all boolean combinations of the properties SS
and  $q^+$.
\end{abstract}

\noindent {\em Keywords:}  maximal countable spaces; selective separability; ideals on countable sets; $p^+$;  $q^+$.

\noindent {\em MSC: 54G05,54A35, 03E57}.

\section{Introduction}

A topological space $X$ is {\em selectively separable} (SS), if
for any sequence $(D_n)_n$ of dense subsets of $X$ there are
finite sets $F_n\subseteq D_n$,  for $n\in\omega$,  such that
$\bigcup_n F_n$ is dense in $X$. This notion was introduced by
Scheepers \cite{Scheeper99}  and  has received a lot of attention
ever since (see for instance \cite{BarmanDow2011, BarmanDow2012,
Bella2009, Bella_et_al2008, Bella2013, CamargoUzca2018b,
Gruenhage2011, Reposvetal2010}). Bella et al.
\cite{Bella_et_al2008}  showed that every separable space with
countable fan tightness is SS. On the other hand, Barman and Dow
\cite{BarmanDow2011}  showed that every separable Fr\'echet space
is also SS (see also \cite{CamargoUzca2018b}).

A topological space is  {\em maximal}  if it is a dense-in-itself
space such that any strictly finer topology has an isolated point.
It was shown by van Douwen \cite{Vand} that a space is maximal if,
and only if, it is {\em extremely disconnected} (i.e. the closure
of every open set is open), {\em nodec} (i.e. every nowhere dense
set is closed)  and every open set is {\em irresolvable} (i.e. if
$U$ is open and $D\subseteq U$ is dense in $U$, then $U\setminus
D$ not dense in $U$). He constructed a countable maximal regular
space.

A countable space $X$  is $q^+$ at a point $x\in X$, if given any
collection of finite sets $F_n\subseteq X$ such that $x\in
\overline{\bigcup_n F_n}$, there is $S\subseteq \bigcup_n F_n$
such that  $x\in \overline{S}$ and  $ S\cap F_n$ has at most one
point  for each $n$. We say that $X$ is a {\em $q^+$-space} if it
is $q^+$ at every point.  Every countable sequential space is
$q^+$ (see \cite[Proposition 3.3]{Todoruzca2000}).  This notion is
motivated by the analogous concept of a $q^+$ filter  used  in
Ramsey theory.

The existence of a maximal regular SS space  is independent of
ZFC. In fact, in ZFC  there is a maximal non SS space
\cite{BarmanDow2011} and  it is consistent with ZFC that no
countable maximal space is SS \cite{BarmanDow2011,
Reposvetal2010}. On the other hand, using MA for countable posets,
Barman and Dow  \cite{BarmanDow2011} showed that  it is also
consistent that there is a maximal,  SS regular countable space.

Similar questions have been studied in the context of  countable
spaces with an analytic topology (i.e. the topology of the space
$X$ is an analytic subset of $2^X$
\cite{todoruzca,Todoruzca2000}).   Maximal topologies are not
analytic.  In fact, analytic topologies cannot be  extremely
disconnected or irresolvable, nevertheless there are nodec regular
spaces with analytic topology \cite{MurgasUzca2019,Todoruzca2014}.

Nodec regular spaces are not easy to construct, the most common
examples are the maximal spaces. In \cite{MurgasUzca2019} were
constructed several examples of nodec regular spaces with analytic
topology that are neither SS nor $q^+$.   However, it was left
open whether there are nodec regular spaces satisfying other
boolean combinations of the properties $q^+$  and SS.  Using  MA
for countable posets, we will construct such nodec (in fact
maximal) regular spaces.  We  do not know if such spaces can be
found with analytic topology. We notice, that analogously to what
happens with $Q$-points, it is consistent that there are no
maximal $q^+$-spaces.

We actually show that (under MA for countable posets) for a given
hereditarily meager ideal $\mathcal{I}$ on $\omega$ there is a
maximal regular topology $\tau^\mathcal{I}$ on $\omega$ which is
crowded in a strong sense, namely, $\tau^\mathcal{I}\cap
\mathcal{I}=\{\emptyset\}$ and, moreover, if $\mathcal{I}$ is
$p^+$ then $\tau^\mathcal{I}$ is SS and if $\mathcal{I}$ is $q^+$,
so is $\tau^\mathcal{I}$.

\section{Preliminaries}

For us \emph{space} will always refer to a countable $T_1$
topological space.

$X$ will denote an infinite countable set. Note that we can use
characteristic functions to view any collection $\mathcal{A}$ of
subsets of $X$ as a subset of $2^X$. So we will say that such a
collection $\mathcal{A}$ is closed, $F_\sigma$, analytic, etc., if
$\mathcal{A}$ has the corresponding property when it is viewed as
a subspace of $2^X$ with the usual product topology. The
collection $\mathcal{A}$ is said to be \emph{hereditary} if
$B\subseteq A \in \mathcal{A}$ implies $B \in \mathcal{A}$.

An \emph{ideal} $\mathcal{I}$ on $X$ is a hereditary collection of
subsets of $X$ which is also closed under taking finite unions. We
denote by $\mathcal{I}^+$ the $\mathcal{I}$-positive subsets of
$X$ (i.e. $\mathcal{I}^+=\mathcal{P}(X) \setminus \mathcal{I}$).
Throughout this article, unless explicitly stated,  we will always
assume our ideals to be \emph{proper} (i.e. $X \notin
\mathcal{I}$) and \emph{free} (i.e. $[X]^{<\omega} \subseteq
\mathcal{I}$ where $[X]^{<\omega} $ denote the collection of
finite subsets of $X$).

We say that an ideal $\mathcal{I}$ is \emph{hereditarily meager}
(HM) if for any $A \in \mathcal{I}^+$ the ideal $\mathcal{I}\res
A=\mathcal{I} \cap \mathcal{P}(A)$ is meager as a subset of $2^A$.
Any analytic or co-analytic ideal (in particular any Borel ideal)
is hereditarily meager.

Given a finite $F \subseteq X$ we write $F \upn = \{A \subseteq X
: F \subseteq A \}$. Note that if $\mathcal{C}$ is a closed and
hereditary collection of subsets of $X$ and $A \subseteq X$ is not
in $\mathcal{C}$ then there is a finite $F \subseteq A$ such that
$F \upn \cap \mathcal{C} = \emptyset$.

Following \cite{Solecki} we define for an ideal $\mathcal{I}$ on
$X$ the set
\begin{align*}
C(\mathcal{I})=\{&\mathcal{C} \subseteq \mathcal{P}(X) :
\mathcal{C} \mbox{ is closed, hereditary and} \\ & \mbox{such that
} \forall A \in \mathcal{I} \, \exists F \in [X]^{<\omega} \, A
\setminus F \in \mathcal{C} \}.
\end{align*}

The following is essentially proved in \cite[Lemma 2.7]{Solecki}:

\begin{lemma}\label{solecki}
If $\mathcal{I}$ is a HM ideal on $X$ and $A \in \mathcal{I}^+$
then there is a $\mathcal{C} \in C(\mathcal{I})$ such that $A
\setminus F \notin \mathcal{C}$ for any $F \in [X]^{<\omega}$.
\end{lemma}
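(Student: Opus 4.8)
The plan is to convert the hereditary meagerness hypothesis into a concrete combinatorial partition of $A$ by way of the Jalali-Naini and Talagrand characterization of meager ideals, and then to read off $\mathcal{C}$ directly from that partition by forbidding each block.

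First I would restrict attention to $A$. Since $\mathcal{I}$ is free and $A \in \mathcal{I}^+$, the set $A$ is infinite and $\mathcal{I} \res A$ is a proper ideal on the countable set $A$ containing $[A]^{<\omega}$; by the HM hypothesis it is meager in $2^A$. By the Jalali-Naini and Talagrand theorem there is then a partition $A = \bigcup_n J_n$ into pairwise disjoint nonempty finite sets such that every $B \in \mathcal{I} \res A$ satisfies that $\{n : J_n \subseteq B\}$ is finite; that is, no member of $\mathcal{I}$ contained in $A$ can contain infinitely many of the blocks $J_n$.

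Next I would set $\mathcal{C} = \{B \subseteq X : J_n \not\subseteq B \text{ for every } n\}$. This is hereditary, and it is closed because its complement $\bigcup_n \{B : J_n \subseteq B\}$ is a union of basic clopen sets (each $J_n$ being finite). To see $\mathcal{C} \in C(\mathcal{I})$, take $D \in \mathcal{I}$; then $D \cap A \in \mathcal{I} \res A$, so $\{n : J_n \subseteq D\} = \{n : J_n \subseteq D \cap A\}$ is finite, say equal to $\{n_1, \dots, n_k\}$, and putting $F = J_{n_1} \cup \cdots \cup J_{n_k}$ removes every block contained in $D$, whence $D \setminus F \in \mathcal{C}$. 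Finally, for the separation clause, any finite $F$ meets only finitely many of the infinitely many disjoint blocks, so some $J_n$ with $J_n \cap F = \emptyset$ satisfies $J_n \subseteq A \setminus F$, giving $A \setminus F \notin \mathcal{C}$, as required.

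The only real obstacle is the first step: one must extract from bare meagerness the right combinatorial witness and, crucially, define $\mathcal{C}$ by forbidding each individual block rather than by a tail condition such as ``contains only finitely many blocks'', since the latter family is merely $F_\sigma$ and would fail to be closed. Once the partition is in hand, the delicate point is that the single partition witnessing meagerness of $\mathcal{I} \res A$ serves both goals at once: it pushes every element of $\mathcal{I}$ into $\mathcal{C}$ modulo a finite set, while leaving $A$ outside $\mathcal{C}$ modulo any finite set, precisely because $A$ contains cofinitely many whole blocks whereas each $\mathcal{I}$-set contains only finitely many.
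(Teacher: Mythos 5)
Your proof is correct. Note that the paper itself gives no argument for this lemma --- it is stated as ``essentially proved in'' Solecki's Lemma 2.7 --- so there is no in-paper proof to compare against; what you have supplied is the natural self-contained unwinding of that citation. Your route (restrict to $A$, apply the Jalali-Naini--Talagrand characterization to the meager ideal $\mathcal{I}\res A$ to get a partition of $A$ into nonempty finite blocks no infinite subfamily of which fits inside a member of $\mathcal{I}$, then let $\mathcal{C}$ be the family of subsets of $X$ containing no whole block) is exactly the standard one, and indeed the paper invokes the same Jalali-Naini--Talagrand theorem later, in the proof of Theorem \ref{homeomorphism}. All the required verifications are carried out correctly: $\mathcal{C}$ is closed and hereditary, every $D \in \mathcal{I}$ lands in $\mathcal{C}$ after deleting the finitely many blocks it contains (here your reduction $J_n \subseteq D \iff J_n \subseteq D \cap A$ correctly handles elements of $\mathcal{I}$ not contained in $A$, which is why $\mathcal{C}$ must consist of subsets of $X$ rather than of $A$), and $A \setminus F$ always retains a whole block. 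Your closing remark is also the right technical point: the tail condition ``contains only finitely many blocks'' is merely $F_\sigma$, so forbidding individual blocks is what makes $\mathcal{C}$ closed.
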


We write $A\subseteq^*B$ if $A\setminus B$ is finite. An ideal
$\mathcal{I}$ is $p^+$, if for every decreasing sequence $(A_n)_n$
of sets in $\mathcal{I}^+$, there is $A\in \mathcal{I}^+$ such
that $A\subseteq^* A_n$ for all $n\in\omega$. We say that
$\mathcal{I}$   is $p^-$, if for every decreasing sequence
$(A_n)_n$ of sets in $\mathcal{I}^+$ such that $A_n\setminus
A_{n+1}\in \mathcal{I}$, there is $B\in \mathcal{I}^+$ such that
$B\subseteq^* A_n$ for all $n$.  An ideal $\mathcal{I}$ is $q^+$,
if for every $A\in \mathcal{I}^+$ and every partition $(F_n)_n$ of
$A$ into finite sets, there is $S\in\mathcal{I}^+$ such that
$S\subseteq A$ and $S\cap F_n$  has at most one element for each
$n$.

For any topological space $X$ and $x\in X$, let $\mathcal{I}_x$ be
the ideal of all subsets $A$ of $X$ such that $x\not\in
\overline{A\setminus\{x\}}$. A point $x$ is $p^+$ if
$\mathcal{I}_x$ is $p^+$. The space $X$ is $p^+$ if every point is
$p^+$. We define analogously the notions of $p^-$ and $q^+$ points
(spaces).

A space  $X$ is {\em discretely generated} (DG) if for every
$A\subseteq X$ and $x\in\overline A$, there is $E\subseteq A$
discrete such that $x\in \overline E$. This notion was introduced
by Dow et al. in \cite{DTTW2002}. It is not easy to construct
spaces which are not DG, the typical examples are maximal spaces
(which are  nodec).

Every  countable $p^-$ regular space is  selectively separable
and discretely generated (see
\cite{CamargoUzca2018b,MurgasUzca2019}). In summary, we have the
following implications  for  countable regular spaces.

\begin{center}
\begin{tikzpicture}
  \node (1) at (4,4) {$1^{st}$};
  \node (f) at (2,3) {Fr\'echet};
  \node (p+) at (6,3) {$p^+$};
  \node (s) at (0,2) {Sequential};
  \node (p-) at (4,2) {$p^-$};
  \node (q) at (0,1) {$q^+$};
  \node (dg) at (2,1) {DG};
  \node (ss) at (6,1) {SS};
  \node (nod) at (2,0) {non nodec};
  \draw[-latex] (1) edge (f) (f) edge (s) (s) edge (q) (1) edge (p+) (p+) edge (p-) (p-) edge (ss) (p-) edge (dg) (dg) edge (nod) (f) edge (p-) (s) edge (dg);
\end{tikzpicture}
\end{center}

Thus, among all these properties,  a maximal space  can  only  be
$q^+$ or SS.

\bigskip

We denote by $\mathfrak{m}_c$ the minimal cardinal $\kappa$ for
which $\mathrm{MA}(\kappa)$ for countable posets fails.  The
statement that $\mathfrak{m}_c=\mathfrak{c}$ is also denoted
$\mathrm{MA_{ctble}}$. It is known that $ \mathfrak{m}_c\leq
\mathfrak{d}$. For a given space $X$, we write $w(X)$ and $\pi
w(X)$ for its weight and $\pi$-weight respectively. The following
facts are known. If $w(X)< \mathfrak{d}$, then $X$ is DG
(\cite{Murtinova2006}), SS  (\cite{Scheeper99} and see also
\cite{BarmanDow2011}) and resolvable (\cite{Cancinoetal}).

\bigskip

For the sake of completeness we give some known examples of
regular topologies to illustrate the  properties DG, $q^+$ and SS.

For each ideal $\mathcal{I}$ on $\omega$ consider the following topology  $\taui$ over $\seq$, the collection of finite sequences on $\omega$. A subset $U$ of $\seq$ is  open if and only if
$\{n\in \omega: \; s\widehat{\;\;}n \not\in U\}\in \mathcal{I}
\;\;\mbox{for all $s\in U$.}$
Let $Seq(\mathcal{I})$ denote the space $(\seq,\taui)$. This space  is
$T_2$, zero dimensional and crowded. Notice that
when $\mathcal{I}$ is the ideal of finite sets, then  $Seq(\mathcal{I})$ is homeomorphic to the Arkhanglel'ski{\~{\i}}-Franklin space $Seq$.  When  $\mathcal{I}$ is analytic, so is $\taui$.  We also know that  $Seq(\mathcal{I})$ is DG and  non SS for any $\mathcal{I}$. It  is $q^+$ if, and only if, $\mathcal{I}$ is $q^+$ (\cite{CamargoUzca2018b}).

Next we show a space which is SS, DG and not $q^+$ that will be used later in the paper.

\begin{example}\label{clopen}
Let  $CL(2^\omega)$ be the collection of all clopen subsets of
$2^\omega$  as a subspace of $2^{2^\omega}$. Then $CL(2^\omega)$
is $p^+$ (and thus SS and DG) and not $q^+$ (see
\cite{CamargoUzca2018b}). We will need later a subspace of
$CL(2^\omega)$ where $q^+$ fails in a somewhat stronger form.  Let
$2^{<\omega}$ be the collection of finite binary sequences. If
$s\in 2^{<\omega}$ and $i\in \{0,1\}$, $|s|$ denotes its length
and $ s\widehat{\;\;}i$ the sequence obtained concatenating $s$
with $i$.   For $s\in 2^{<\omega}$ and $\alpha\in 2^\omega$, let
$s\prec \alpha$ if $\alpha(i)=s(i)$ for all $i<|s|$ and
$[s]=\{\alpha\in 2^\omega: \; s\prec \alpha\}$. Each $x\in
CL(2^\omega)$ is a finite union of sets of the form $[s]$ for
$s\in 2^{<\omega}$. Let $s,t\in 2^n$, we say that $s$ and $t$ are
{\em linked} if there is a sequence $u\in 2^{n-1}$ such that
$s=u\widehat{\;\;}i$ and $t=u\widehat{\;\;}j$ with $i+j=1$.  For
$k$ a positive integer,  we say that  a $x\in X$  is {\em
$k$-adequated}, if   $x$ can be written as $[s_1]\cup\cdots\cup
[s_m]$  with each $s_i\in 2^k$  and  such that any pair of them
are not linked. Let $A_k=\{x\in CL(2^{\omega}): \; x \;\mbox{is
$k$-adequated}\}$.  Notice that $\bigcup_{k\in\omega}A_{k+1}$ is
dense in $CL(2^\omega)$. Let $X=\bigcup_{k\in \omega }A_{k+1}$.
Let $S\subseteq X$ be such that $S\cap A_k$ has at most one
element for each $k$. Then $S$ is closed in $X$. This clearly
shows that $X$ is not $q^+$.
\end{example}

As we have already mentioned, analytic nodec spaces are hard to
define and they are the only examples we know of non DG analytic
spaces. In  \cite{MurgasUzca2019} was constructed an analytic
regular space $\mathbb{Y}(\mathcal{I}_{nd}^{**})$ which is nodec,
non $q^+$ and non SS. However, we do not know if there is an
analytic nodec $q^+$ (or SS) regular space.

In the following table we summarize what we know. One of the main
goals of this paper is to construct the maximal spaces  mentioned
in the last column. Notice that the existence of those maximal
spaces (except the one at the bottom row) is not provable in ZFC.

\bigskip

\begin{tabular}{|c|c|c|c|c|}
\hline
DG & SS & $q^+$ & Analytic  topology & Non definable topology\\
\hline \hline
$\checkmark$ & $\checkmark$ & $\checkmark$  & $\mathbb{Q}$  & \\ \hline
\checkmark & \checkmark & $\times$ & $Cl(2^{\mathbb{N}})$ &  \\ \hline
\checkmark & $\times$ & \checkmark &  $Seq$& \\ \hline
\checkmark & $\times$ & $\times$  &  $Seq(\mathcal{I})$ (with $\mathcal{I}$ non $q^+$)&  \\ \hline
$\times$ & \checkmark & \checkmark  &  {??}  & ($\mathrm{MA_{ctble}}$) Maximal \\ \hline
$\times$ & \checkmark & $\times$ & {??} & ($\mathrm{MA_{ctble}}$) Maximal  \\ \hline
$\times$ & $\times$ & \checkmark  &  {??} & ($\mathrm{MA_{ctble}}$) Maximal  \\ \hline
$\times$ & $\times$ & $\times$ & $\mathbb{Y}(\mathcal{I}_{nd}^{**})$ &  Maximal  \\ \hline
\end{tabular}

\section{$\mathcal{I}$-crowded topologies}

When constructing a crowded topology on a set $X$ we only need to
worry not to include any finite set in the topology, that is, to
keep the open sets outside of the ideal $[X]^{<\omega}$. In order
to get additional properties on the topology it will be useful to
follow this idea with other ideals.

\begin{definition} Given a space $(X,\tau)$ and an ideal
$\mathcal{I}$ on $X$ we say that:
\begin{itemize}
\item The topology $\tau$ is \emph{$\mathcal{I}$-crowded} if $\tau \cap
\mathcal{I}=\{\emptyset\}$.

\item A subset $A \subseteq X$ is \emph{$(\mathcal{I},\tau)$-crowded} if
for every $U \in \tau$ the intersection $A \cap U$ is either empty
or belongs to $\mathcal{I}^+$.
\end{itemize}
\end{definition}

Note that saying that $\tau$ is $\mathcal{I}$-crowded is
equivalent to say that $X$ is $(\mathcal{I},\tau)$-crowded. For
future reference we collect a few simple observations in the
following remark which the reader can easily corroborate.

\begin{remark}\label{remark on ideals} \
\begin{enumerate}
\item A topology $\tau$ is crowded if and only if it is
$[X]^{<\omega}$-crowded if and only if it is $\mathcal{I}$-crowded
for some $\mathcal{I}$.

\item If $\tau$ has a $\pi$-net (in particular if it has a base or
a $\pi$-base) contained in $\mathcal{I}^+$ then $\tau$ is
$\mathcal{I}$-crowded.

\item The union of any $\subseteq$-chain of $\mathcal{I}$-crowded
topologies on $X$ generates an $\mathcal{I}$-crowded topology on
$X$.

\item If $A$ is $(\mathcal{I},\tau)$-crowded and $A \subseteq B
\subseteq cl_\tau(A)$ then $B$ is also
$(\mathcal{I},\tau)$-crowded.
\end{enumerate}
\end{remark}

The following lemma allows us to add certain subsets of $X$ to a
given $\mathcal{I}$-crowded topology on $X$ while keeping the
topology $\mathcal{I}$-crowded.

\begin{lemma}\label{partition implies extension ideals}
Let $(X,\tau)$ be a zero-dimensional $\mathcal{I}$-crowded space
and $A \subseteq X$. Suppose that $A \subseteq X$ admits a
partition $A=\bigcup_{m \in \omega} A_m$ such that
$\overbar{A}_m=\overbar{A}$ and $A_m$ is
$(\mathcal{I},\tau)$-crowded for all $m \in \omega$. Then there is
a zero-dimensional $\mathcal{I}$-crowded topology $\tau'\supseteq
\tau$ on $X$ such that $w(\tau')\leq w(\tau)$ and $A \in \tau'$.
Moreover $cl_{\tau'}(A)=cl_\tau(A)$.
\end{lemma}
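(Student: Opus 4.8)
The plan is to refine $\tau$ by declaring each piece $A_m$ to be clopen; the partition hypothesis $\overline{A}_m=\overline{A}$ will be exactly what keeps the complements of the $A_m$'s large enough to stay $\mathcal{I}$-positive. Fix a clopen base $\mathcal{B}$ for $\tau$ with $|\mathcal{B}|=w(\tau)$; since $\tau$ is zero-dimensional I would take $\mathcal{B}$ to be a Boolean subalgebra of $\mathcal{P}(X)$. Let $\mathcal{C}$ be the Boolean subalgebra of $\mathcal{P}(X)$ generated by $\mathcal{B}\cup\{A_m:m\in\omega\}$, and let $\tau'$ be the topology having $\mathcal{C}$ as a base. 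Since $\mathcal{C}$ is closed under complements, every member of $\mathcal{C}$ is $\tau'$-clopen, so $\tau'$ is zero-dimensional; since $\mathcal{B}\subseteq\mathcal{C}$ we have $\tau\subseteq\tau'$; and $A=\bigcup_m A_m$ is $\tau'$-open because each $A_m\in\mathcal{C}$. As $\mathcal{B}$ is infinite and we adjoined only countably many sets, $|\mathcal{C}|=w(\tau)$, so $w(\tau')\leq w(\tau)$.

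The computation everything rests on is a normal form for the members of $\mathcal{C}$. Using that the $A_m$ are pairwise disjoint, every nonempty element of $\mathcal{C}$ is a finite union of sets each of one of two forms: (type I) $U\cap A_m$ with $U\in\mathcal{B}$; or (type II) $U\setminus(A_{n_1}\cup\cdots\cup A_{n_q})$ with $U\in\mathcal{B}$ and $n_1,\dots,n_q$ distinct (the case $q=0$ giving $U$ itself). Thus the sets of type I and type II form a base for $\tau'$, and to check the remaining properties it suffices to examine these two families.

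The heart of the argument, and the only place the full partition hypothesis is used, is that every nonempty set of either form is $\mathcal{I}$-positive, which gives $\mathcal{I}$-crowdedness. A type I set $U\cap A_m$ is nonempty and $(\mathcal{I},\tau)$-crowded (as $A_m$ is), hence lies in $\mathcal{I}^+$. For a type II set $W=U\setminus(A_{n_1}\cup\cdots\cup A_{n_q})$, choose an index $m'\notin\{n_1,\dots,n_q\}$; then $U\cap A_{m'}\subseteq W$ since the pieces are disjoint. If $U\cap\overline{A}=\emptyset$ then $W=U$ and we are done by the $\mathcal{I}$-crowdedness of $\tau$; otherwise pick $z\in U\cap\overline{A}=U\cap\overline{A}_{m'}$, so the $\tau$-open set $U$ meets $A_{m'}$, and then $U\cap A_{m'}\in\mathcal{I}^+$ because $A_{m'}$ is $(\mathcal{I},\tau)$-crowded. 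In both cases $W\supseteq U\cap A_{m'}\in\mathcal{I}^+$, so $W\in\mathcal{I}^+$. This is exactly where $\overline{A}_m=\overline{A}$ is indispensable: collapsing one, or even finitely many, of the dense pieces still leaves another dense piece inside $U$ to witness positivity. I expect this step to be the main obstacle, since a naive refinement that simply adjoins $A$ (without splitting it into the pieces) fails to be zero-dimensional precisely because the boundary points of $A$ cannot be separated from $A$ by clopen sets.

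Finally, the same spare-piece idea yields $cl_{\tau'}(A)=cl_\tau(A)$. The inclusion $\subseteq$ holds because $\tau\subseteq\tau'$. For $\supseteq$ it is enough to see that each $b\in\overline{A}\setminus A$ lies in $cl_{\tau'}(A)$. Since $b\notin A$, no type I set contains $b$, so every basic $\tau'$-neighborhood of $b$ is a union of type II sets $U\setminus(A_{n_1}\cup\cdots\cup A_{n_q})$ with $b\in U$; choosing $m'\notin\{n_1,\dots,n_q\}$ as above and using $b\in U\cap\overline{A}_{m'}$, such a neighborhood meets $A_{m'}\subseteq A$. Hence $b\in cl_{\tau'}(A)$, and therefore $cl_{\tau'}(A)=\overline{A}=cl_\tau(A)$, completing the verification.
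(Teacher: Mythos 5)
Your proof is correct and follows essentially the same route as the paper: both adjoin each piece $A_m$ and its complement as clopen sets, reduce basic $\tau'$-open sets to the two normal forms $U\cap A_m$ and $U$ minus finitely many pieces, and use the disjointness of the $A_m$'s together with $\overbar{A}_m=\overbar{A}$ and the $(\mathcal{I},\tau)$-crowdedness of each piece to verify $\mathcal{I}$-positivity and the closure claim. Your Boolean-subalgebra packaging is only a cosmetic variant of the paper's subbase argument.
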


\begin{proof}
Fix a base $\mathcal{B}$ of clopen subsets for $\tau$, closed
under finite intersections and with $|\mathcal{B}|=w(\tau)$.
Define
$$\mathcal{B}'=\mathcal{B} \cup \left\{A_m : m \in \omega \right\}
\cup \left\{ X \setminus A_m : m \in \omega \right\}.$$ Since
$\mathcal{B} \subseteq \mathcal{B}'$ and $|\mathcal{B}'|
=|\mathcal{B}|$ we have that $\mathcal{B}'$ is a subbase for a
topology $\tau' \supseteq \tau$ with $w(\tau')\leq w(\tau)$.
Clearly all the elements of $\mathcal{B}'$ are $\tau'$-clopen and
therefore $\tau'$ is zero-dimensional. Note that $A \in \tau'$
being a union of elements of $\mathcal{B}'$. To show that $\tau'$
is $\mathcal{I}$-crowded, note that if $V$ is a non-empty finite
intersection of elements of $\mathcal{B}'$ then there is $U \in
\mathcal{B}$ such that $V$ is of the form $V=U \cap A_m$ for some
$m \in \omega$ or $V=U \cap \bigcap \{(X \setminus A_j): j \in
J\}$ for some finite $J \subseteq \omega$. If $U \cap A=
\emptyset$ then $V=U$ which is in $\mathcal{I}^+$ since $\tau$ is
$\mathcal{I}$-crowded. If $U \cap A \neq \emptyset$ then by
assumption $U \cap A_i$ is non-empty and belongs to
$\mathcal{I}^+$ for every $i \in \omega$ and therefore $V \in
\mathcal{I}^+$ since it contains at least one of these. To see that $cl_{\tau'}(A)=cl_\tau(A)$ note that if $x \in cl_\tau(A)$ and $V$ is a basic $\tau'$-neighborhood of $x$, the previous argument shows in particular that $V \cap A \neq \emptyset$ and hence $x \in cl_{\tau'}(A)$.
\end{proof}

Perhaps it is well known that a crowded space of weight smaller
than $\mathfrak{m}_c$ is $\omega$-resolvable (i.e. it can be
partitioned into countably many disjoint dense subsets). Next we
prove a stronger result.

\begin{lemma}\label{resolvability ideals}
Let $\mathcal{I}$ be a HM ideal on $X$ and $\tau$ an $\mathcal{I}$-crowded topology on $X$ with
$w(\tau)<\mathfrak{m}_c$. Suppose that $A \subseteq X$ is
$(\mathcal{I},\tau)$-crowded. Then there is a partition
$A=\bigcup_{m \in \omega} A_m$ such that each
$A_m$ is $(\mathcal{I},\tau)$-crowded and dense in $A$.
\end{lemma}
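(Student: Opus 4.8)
The plan is to obtain the partition by forcing. Consider the poset $\mathbb{P}$ of all finite partial functions $p\colon A\to\omega$ ordered by reverse inclusion, and set $A_m=f^{-1}(m)$ where $f=\bigcup\mathcal G$ for a sufficiently generic filter $\mathcal G$. Since $X$, and hence $A$, is countable, $\mathbb P$ is a \emph{countable} poset, so $\mathrm{MA}$ for countable posets applies regardless of how large $w(\tau)$ is, provided the family of dense sets we meet has size $<\mathfrak m_c$. Fix a base $\mathcal B$ for $\tau$ with $|\mathcal B|=w(\tau)$. To force $f$ total I use the dense sets $D_a=\{p:a\in\dom(p)\}$ for $a\in A$; these guarantee that the $A_m$ form a genuine partition of $A$. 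The real content is to make each $A_m$ both $(\mathcal I,\tau)$-crowded and dense in $A$.

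The only delicate point is positivity: I must ensure $A_m\cap U\in\mathcal I^+$ whenever $A_m\cap U\neq\emptyset$, after which density of $A_m$ in $A$ is automatic. This is where the hypothesis that $\mathcal I$ is HM enters, through Lemma~\ref{solecki}. For each $U\in\mathcal B$ with $A\cap U\neq\emptyset$ we have $A\cap U\in\mathcal I^+$ (because $A$ is $(\mathcal I,\tau)$-crowded), so I fix once and for all a family $\mathcal C_U\in C(\mathcal I)$ with $(A\cap U)\setminus F\notin\mathcal C_U$ for every $F\in[X]^{<\omega}$. For $m\in\omega$, such $U$, and $F\in[X]^{<\omega}$, define
$$E_{m,U,F}=\bigl\{p\in\mathbb P:\ \exists\,G\subseteq(U\setminus F)\cap\dom(p)\ \text{with}\ p(a)=m\ (\forall a\in G)\ \text{and}\ G\upn\cap\mathcal C_U=\emptyset\bigr\}.$$
To see $E_{m,U,F}$ is dense, given $p$ put $F'=F\cup\dom(p)$; since $(A\cap U)\setminus F'\notin\mathcal C_U$ and $\mathcal C_U$ is closed and hereditary, the observation that a set outside a closed hereditary family contains a finite $G$ with $G\upn$ disjoint from it supplies a finite $G\subseteq(A\cap U)\setminus F'$ with $G\upn\cap\mathcal C_U=\emptyset$, and extending $p$ by $p(a)=m$ on $G$ lands in $E_{m,U,F}$. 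In all there are at most $w(\tau)<\mathfrak m_c$ dense sets $D_a$ and $E_{m,U,F}$.

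Applying $\mathrm{MA}$ for countable posets yields a filter $\mathcal G$ meeting every $D_a$ and every $E_{m,U,F}$. The $D_a$'s make $f$ total, so $\{A_m\}_{m}$ partitions $A$. Fix $m$ and a basic $U$ meeting $A$. For each $F$, meeting $E_{m,U,F}$ produces a finite $G\subseteq A_m\cap(U\setminus F)$ with $G\upn\cap\mathcal C_U=\emptyset$, so $A_m\cap(U\setminus F)\in G\upn$ gives $A_m\cap(U\setminus F)\notin\mathcal C_U$; as $\mathcal C_U\in C(\mathcal I)$, having $(A_m\cap U)\setminus F\notin\mathcal C_U$ for every $F$ forces $A_m\cap U\in\mathcal I^+$, in particular $A_m\cap U\neq\emptyset$. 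The passage from basic to arbitrary open sets is routine: if $V\in\tau$ and $A_m\cap V\neq\emptyset$, choose a basic $U\subseteq V$ with $A_m\cap U\neq\emptyset$; then $A_m\cap V\supseteq A_m\cap U\in\mathcal I^+$, so $A_m\cap V\in\mathcal I^+$ since $\mathcal I^+$ is upward closed, and the same remark yields density of $A_m$ in $A$. Thus each $A_m$ is $(\mathcal I,\tau)$-crowded and dense in $A$, as required. The main obstacle throughout is exactly this positivity of the pieces, which the finite-approximation poset cannot control directly and which is recovered only through the interplay of hereditary meagerness (Lemma~\ref{solecki}) with the closed hereditary families in $C(\mathcal I)$.
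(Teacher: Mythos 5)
Your proposal is correct and follows essentially the same route as the paper: the same countable poset of finite partial functions $A\to\omega$, the same use of Lemma~\ref{solecki} to fix families $\mathcal{C}_U\in C(\mathcal{I})$, dense sets equivalent to the paper's $\mathbb{D}(U,F,m)$ (your existential formulation with a witness $G$ is interchangeable with the paper's condition on $p^{-1}(m)\cap U\setminus F$), and the identical verification that positivity of each $A_m\cap U$ follows from $(A_m\cap U)\setminus F\notin\mathcal{C}_U$ for all finite $F$. No gaps; the argument matches the paper's proof in substance and detail.
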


\begin{proof}
Let $\mathbb{P}$ be the set of all finite partial functions $p: A
\to \omega$ ordered by reverse inclusion and fix $\mathcal{B}$ a
base for $\tau$ with $|\mathcal{B}|<\mathfrak{m}_c$. For every $U
\in \mathcal{B}$ for which $A \cap U$ is non-empty we use Lemma
\ref{solecki} to fix $\mathcal{C}_U \in C(\mathcal{I})$ such that
$(A \cap U) \setminus F \notin \mathcal{C}_U$ for any finite $F
\subseteq X$. Now for every such $U$, every $F \in [X]^{<\omega}$
and every $m \in \omega$, the set
$$
\mathbb{D}(U,F,m)=\{p \in \mathbb{P}
: (p^{-1}(m) \cap U \setminus F)\upn \cap \mathcal{C}_U= \emptyset\}
$$
is dense in $\mathbb{P}$.

To see this, fix $U,F,m$ as before and $q \in \mathbb{P}$. Since
$A \cap U \setminus (F \cup \dom q) \notin \mathcal{C}_U$, using
that $\mathcal{C}_U$ is closed and hereditary, we can find a
finite $E \subseteq A \cap U \setminus (F \cup \dom q)$ such that
$E\upn \cap \mathcal{C}_U = \emptyset$. But now $p=q \cup (E
\times \{m\})$ is an extension of $q$ that belongs to
$\mathbb{D}(U,F,m)$.

Since $\mathbb{P}$ is countable and
$|\mathcal{B}|<\mathfrak{m}_c$, there exists a filter $G \subseteq
\mathbb{P}$ intersecting all the $\mathbb{D}(U,F,m)$ and also
intersecting the dense sets $\{p \in \mathbb{P} : a \in \dom p \}$
for $a \in A$. Thus $\bigcup G :A \to \omega$ is a total function
and $A_m=(\bigcup G)^{-1}(m)$ for $m \in \omega$ defines a
partition of $A$ as desired.

To see that $A_m$ is $(\mathcal{I},\tau)$-crowded and dense in $A$
let $U \in \mathcal{B}$ such that $A \cap U \neq \emptyset$. We
want to show that $A_m \cap U \in \mathcal{I}^+$, so fix a finite
$F \subseteq X$ and choose a $p \in G \cap \mathbb{D}(U,F,m)$.
Then $(p^{-1}(m) \cap U \setminus F)\upn \cap \mathcal{C}_U =
\emptyset$ and therefore $A_m \cap U \setminus F \notin
\mathcal{C}_U$. Since this is true for any finite $F$ and
$\mathcal{C}_U$ belongs to $C(\mathcal{I})$, it follows that $A_m
\cap U \notin \mathcal{I}$.
\end{proof}

Note that in particular this last lemma tells us that $X$ can be
partitioned into countably many dense subsets, so $(X,\tau)$ is
$\omega$-resolvable in a strong sense: the dense subsets can be
chosen outside of the ideal $\mathcal{I}$. For the ideal
$\mathcal{I}=[X]^{<\omega}$ we just get:

\begin{corollary}
Any crowded space of weight smaller than $\mathfrak{m}_c$ is $\omega$-resolvable.
\end{corollary}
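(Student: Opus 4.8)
The plan is to read this off from Lemma \ref{resolvability ideals} by specializing the ideal to $\mathcal{I} = [X]^{<\omega}$, the ideal of finite subsets of $X$. First I would check that $[X]^{<\omega}$ is a legitimate choice of HM ideal: it is $F_\sigma$ (hence Borel) because $[X]^{<\omega} = \bigcup_{n \in \omega} \{A \subseteq X : |A| \le n\}$ exhibits it as a countable union of closed subsets of $2^X$, and by the remark following the definition of hereditarily meager ideals every Borel ideal is HM. Note also that a crowded $T_1$ space has no isolated points, so $X$ is automatically infinite, matching the standing hypothesis on the underlying set.

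Next I would translate the hypothesis of being \emph{crowded} into the language of the lemma. By Remark \ref{remark on ideals}(1), a topology is crowded precisely when it is $[X]^{<\omega}$-crowded, so the given $\tau$ is $\mathcal{I}$-crowded for $\mathcal{I} = [X]^{<\omega}$; and by the observation recorded immediately after the Definition, $\tau$ being $\mathcal{I}$-crowded is equivalent to $X$ being $(\mathcal{I}, \tau)$-crowded. Thus, with $A = X$ and the assumed bound $w(\tau) < \mathfrak{m}_c$, every hypothesis of Lemma \ref{resolvability ideals} is satisfied.

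Applying the lemma with $A = X$ then produces a partition $X = \bigcup_{m \in \omega} X_m$ in which each $X_m$ is dense in $X$. These are countably many pairwise disjoint dense subsets whose union is $X$, which is exactly what it means for $(X, \tau)$ to be $\omega$-resolvable. Since the corollary is a pure specialization of the preceding lemma, I expect no genuine obstacle; the only care needed is the routine verification that $[X]^{<\omega}$ qualifies as a HM ideal and that crowdedness is precisely the instance $A = X$ of $(\mathcal{I}, \tau)$-crowdedness that the lemma consumes.
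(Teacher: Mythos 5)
Your proposal is correct and follows exactly the paper's route: the corollary is stated there as an immediate specialization of Lemma \ref{resolvability ideals} to the ideal $\mathcal{I}=[X]^{<\omega}$ with $A=X$. Your additional checks (that $[X]^{<\omega}$ is $F_\sigma$, hence Borel, hence HM, and that crowdedness is precisely $[X]^{<\omega}$-crowdedness) are the routine verifications the paper leaves implicit, and they are all accurate.
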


We can also improve Lemma \ref{partition implies extension ideals}
for spaces of small weight. The next corollary will be key in our
main construction.

\begin{corollary}\label{extension ideals}
Let $\mathcal{I}$ be a HM ideal on $X$ and $\tau$ a
zero-dimensional $\mathcal{I}$-crowded topology on $X$ with
$w(\tau)<\mathfrak{m}_c$. Suppose that $A \subseteq X$ is
$(\mathcal{I}, \tau)$-crowded. Then there is a zero-dimensional
$\mathcal{I}$-crowded topology $\tau'\supseteq \tau$ on $X$ such
that $w(\tau')\leq w(\tau)$ and $A \in \tau'$.   Moreover
$cl_{\tau'}(A)=cl_\tau(A)$.
\end{corollary}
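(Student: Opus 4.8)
The plan is to chain the two preceding results: Lemma \ref{resolvability ideals} produces exactly the kind of partition that Lemma \ref{partition implies extension ideals} consumes, so the corollary should follow with essentially no new work.

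First I would apply Lemma \ref{resolvability ideals} to the HM ideal $\mathcal{I}$, the $\mathcal{I}$-crowded topology $\tau$ (which satisfies $w(\tau)<\mathfrak{m}_c$), and the $(\mathcal{I},\tau)$-crowded set $A$. All hypotheses of that lemma are met verbatim, so it yields a partition $A=\bigcup_{m\in\omega}A_m$ in which each $A_m$ is $(\mathcal{I},\tau)$-crowded and dense in $A$.

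The one small bridge to check is that ``dense in $A$'' translates into the closure condition demanded by Lemma \ref{partition implies extension ideals}, namely $\overbar{A}_m=\overbar{A}$. Since $A_m\subseteq A$ we immediately get $\overbar{A}_m\subseteq\overbar{A}$. Conversely, $A_m$ being dense in $A$ means $A\subseteq cl_\tau(A_m)=\overbar{A}_m$, and taking $\tau$-closures gives $\overbar{A}\subseteq\overbar{A}_m$. Hence $\overbar{A}_m=\overbar{A}$ for every $m\in\omega$.

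With this verified, the partition $A=\bigcup_m A_m$ satisfies all the hypotheses of Lemma \ref{partition implies extension ideals} for the zero-dimensional $\mathcal{I}$-crowded space $(X,\tau)$. Applying that lemma directly produces a zero-dimensional $\mathcal{I}$-crowded topology $\tau'\supseteq\tau$ with $w(\tau')\leq w(\tau)$, $A\in\tau'$, and $cl_{\tau'}(A)=cl_\tau(A)$, which is precisely the desired conclusion. I do not expect any genuine obstacle here: the real content lives entirely in the two lemmas, and the only point requiring attention is the elementary equivalence between density of $A_m$ in $A$ and the closure equality $\overbar{A}_m=\overbar{A}$, which is routine.
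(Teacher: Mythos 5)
Your proof is correct and follows exactly the paper's own route: the paper's proof of this corollary is literally ``It follows immediately from Lemmas \ref{partition implies extension ideals} and \ref{resolvability ideals},'' and you have simply made explicit the (routine) bridge that density of each $A_m$ in $A$ yields $\overbar{A}_m=\overbar{A}$.
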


\begin{proof}
It follows immediately from Lemmas \ref{partition implies
extension ideals} and \ref{resolvability ideals}.
\end{proof}

The last two observations on Remark \ref{remark on ideals} and a
repeated application of the previous result allows us to show:

\begin{lemma}\label{discrete ideals}
Let $\mathcal{I}$ be an HM ideal on $X$ and $\tau$ a
zero-dimensional $\mathcal{I}$-crowded topology on $X$ with
$w(\tau)<\mathfrak{m}_c$. Suppose that $D$ is an
$(\mathcal{I},\tau)$-crowded subset of $X$. Then there is a
zero-dimensional $\mathcal{I}$-crowded topology $\tau'\supseteq
\tau$ on $X$ such that $w(\tau')\leq w(\tau)$, $cl_\tau(D) \in
\tau'$ and $cl_\tau(D) \setminus D$ is $\tau'$-discrete.
\end{lemma}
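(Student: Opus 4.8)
The plan is to use Corollary~\ref{extension ideals} to add $cl_\tau(D)$ as an open set to the topology, and then repeatedly apply it again to split off $cl_\tau(D)\setminus D$ into pieces that become isolated from $D$, making $cl_\tau(D)\setminus D$ discrete. First I would observe that $D$ is $(\mathcal{I},\tau)$-crowded, so by Remark~\ref{remark on ideals}(4) the whole set $C:=cl_\tau(D)$ is also $(\mathcal{I},\tau)$-crowded (since $D\subseteq C\subseteq cl_\tau(D)$). Applying Corollary~\ref{extension ideals} to $C$ gives a zero-dimensional $\mathcal{I}$-crowded topology $\tau_0\supseteq\tau$ with $w(\tau_0)\leq w(\tau)$, with $C\in\tau_0$, and with $cl_{\tau_0}(C)=cl_\tau(C)=C$; so $C$ is now clopen.

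Next I would handle the discreteness of $R:=C\setminus D$. The idea is that to make $R$ discrete I must separate each of its points from the rest, and the natural tool is again Corollary~\ref{extension ideals}, but applied to $R$ itself. The subtlety is that $R$ need not be $(\mathcal{I},\tau_0)$-crowded, so I cannot directly open it up while staying $\mathcal{I}$-crowded. Instead I would try to open up $R$ as a set that \emph{is} allowed: since $C$ is clopen and $D$ is dense in $C$, the set $R=C\setminus D$ is nowhere dense in the subspace $C$. The plan is to first use Lemma~\ref{resolvability ideals} on $D$ (which is $(\mathcal{I},\tau_0)$-crowded and dense in $C$) to fix a countable partition of $D$ into $(\mathcal{I},\tau_0)$-crowded dense-in-$C$ pieces, and then, enumerating $R=\{r_n:n\in\omega\}$, attach to each $r_n$ a distinct crowded ``halo'' drawn from these pieces so that the resulting set is $(\mathcal{I},\tau_0)$-crowded with each $r_n$ in its closure but with the $r_n$ separated from one another. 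Concretely, I would build, by a further application of Corollary~\ref{extension ideals}, an open set that contains $r_n$ but omits all $r_m$ with $m\neq n$; iterating and taking the union of the chain (Remark~\ref{remark on ideals}(3)) yields the final topology $\tau'$ in which every point of $R$ has a neighborhood meeting $R$ only in itself, i.e. $R$ is $\tau'$-discrete.

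Throughout, I would keep track of the two invariants that Corollary~\ref{extension ideals} preserves at each step: the topology stays zero-dimensional and $\mathcal{I}$-crowded, and the weight does not increase, so $w(\tau')\leq w(\tau)<\mathfrak{m}_c$ is maintained even after countably many applications (a countable union of topologies of weight $<\mathfrak{m}_c$ still has weight $<\mathfrak{m}_c$, and by Remark~\ref{remark on ideals}(3) the union generates an $\mathcal{I}$-crowded topology). I would also verify that opening up the separating sets does not enlarge $cl(C)$, so that $C=cl_\tau(D)$ remains clopen in $\tau'$, giving $cl_\tau(D)\in\tau'$ as required.

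\emph{The main obstacle} I anticipate is the one flagged above: $R=cl_\tau(D)\setminus D$ carries no crowdedness of its own, so the separating open sets that isolate its points cannot be built from $R$ alone without violating $\mathcal{I}$-crowdedness. The crux of the argument is therefore to organize the available crowded material in $D$ into enough disjoint dense pieces (via Lemma~\ref{resolvability ideals}) so that each point of $R$ can be given a clopen neighborhood which is $(\mathcal{I},\tau_0)$-crowded, lies inside $C$, contains exactly one point of $R$, and is disjoint from the neighborhoods assigned to the other points of $R$; making these finitely/countably many requirements simultaneously consistent is where the careful bookkeeping lies.
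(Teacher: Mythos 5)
Your proposal correctly identifies the right toolkit (iterated applications of Corollary~\ref{extension ideals} plus a chain union) and even flags the central difficulty, but the execution has a genuine gap: it relies on preservation properties that Corollary~\ref{extension ideals} does not provide. After applying the corollary to $C=cl_\tau(D)$ you assert that $D$ is still $(\mathcal{I},\tau_0)$-crowded, which you need both to invoke Lemma~\ref{resolvability ideals} for $D$ with respect to $\tau_0$ and to keep your halos dense in $C$. The corollary guarantees nothing of the sort: it only controls the set being opened and its closure. Concretely, take $\mathcal{I}=[X]^{<\omega}$, $(X,\tau)$ homeomorphic to the rationals, and $D$ dense with dense complement $R$, so $C=X$. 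For any $d_0\in D$, the topology $\tau_0$ generated by $\tau$ together with the complementary pair $R\cup\{d_0\}$ and $D\setminus\{d_0\}$ satisfies \emph{every} conclusion of the corollary applied to $A=C$ (it is zero-dimensional, $\mathcal{I}$-crowded, of the same weight, $C\in\tau_0$, and $cl_{\tau_0}(C)=cl_\tau(C)$), yet $D\cap(R\cup\{d_0\})=\{d_0\}$, so $d_0$ is now isolated in the subspace $D$: the set $D$ is no longer $(\mathcal{I},\tau_0)$-crowded and cannot be partitioned into infinitely many disjoint subsets dense in $C$, so your second step cannot start. The same defect recurs at every later stage: after opening the halo $A_0\cup\{r_0\}$, nothing in the \emph{statement} of the corollary guarantees that $A_1$ is still crowded or that $r_1$ is still in its closure. (These preservation facts do hold for the particular topology constructed inside Lemma~\ref{partition implies extension ideals}, essentially because new basic open sets trace on the complement of the opened set as old open sets, but using that would require reopening that proof and adding explicit ``moreover'' clauses; your argument cites the corollary as a black box.)

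The paper's proof is arranged precisely so that no such preservation is ever needed: it enumerates $cl_\tau(D)\setminus D=\{a_n:n\in\omega\}$ and applies Corollary~\ref{extension ideals} to the \emph{increasing} sets $D_n=D\cup\{a_i:i\leq n\}$, getting $\tau_{n+1}$ from $\tau_n$. At each stage the needed hypothesis follows from the corollary's own conclusions: $D_{n-1}$ is $\tau_n$-open, and any open subset of an $\mathcal{I}$-crowded topology is automatically $(\mathcal{I},\tau_n)$-crowded (its intersection with any open set is open); moreover $a_n\in cl_{\tau_n}(D_{n-1})=cl_{\tau_{n-1}}(D_{n-1})=cl_\tau(D)$ by the ``moreover'' clause and induction, so $D_n$ is $(\mathcal{I},\tau_n)$-crowded by Remark~\ref{remark on ideals}(4). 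Discreteness then comes for free: $D_n$ is an open neighborhood of $a_n$ meeting $cl_\tau(D)\setminus D$ only in the finite set $\{a_0,\dots,a_n\}$, and spaces here are $T_1$; openness of $cl_\tau(D)=\bigcup_n D_n$ is also automatic. If you replace your disjoint halos by these increasing sets (or by increasing unions of your halos), your argument goes through; as written, it has a real gap.
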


\begin{proof}
Suppose that $cl_\tau(D) \setminus D$ is infinite (otherwise we
can just let $\tau'$ be the topology given by Corollary
\ref{extension ideals} applied to $\tau$ and $cl_\tau(D)$) and fix
an enumeration $cl_\tau(D) \setminus D=\{a_n:n \in \omega\}$. Let
$\tau_0=\tau$ and given $\tau_n$ let $\tau_{n+1}$ be the topology
given by Corollary \ref{extension ideals} applied to the space
$(X,\tau_n)$ and the subset $D \cup \{a_i:i \leq n\}$. It is clear
that all the $\tau_n$'s are $\mathcal{I}$-crowded,zero-dimensional
and $w(\tau_n) \leq w(\tau)$. So if we let $\tau'$ be the topology
generated by $\bigcup_{n \in \omega} \tau_n$ we also get a
zero-dimensional $\mathcal{I}$-crowded topology with $w(\tau')
\leq w(\tau)$. Moreover, since the set $D \cup \{a_i:i \leq n\}$
is $\tau'$-open for each $n \in \omega$, $cl_\tau(D) \setminus D$
is $\tau'$-discrete and $cl_\tau(D)$ is $\tau'$-open.
\end{proof}

Note that the conclusion of the previous result guarantees that
$cl_{\tilde{\tau}}(D)=cl_\tau(D)$ for any crowded topology
$\tilde{\tau} \supseteq \tau'$.

The following two lemmas generalize the fact that any space of
weight smaller than $\mathfrak{m}_c$ is both selectively separable
and $q^+$.

\begin{lemma}\label{SS ideals}
Let $\mathcal{I}$ be a $p^+$ and HM ideal on $X$ and $\tau$ an
$\mathcal{I}$-crowded topology on $X$ with $\pi w(\tau)<
\mathfrak{m}_c$. Suppose that $\langle D_i :i \in \omega \rangle$
is a decreasing sequence of $(\mathcal{I},\tau)$-crowded dense
subsets of $X$. Then there are finite sets $K_i \subseteq D_i$ for
$i\in \omega$ such that $\bigcup_{i \in \omega}K_i$ is dense and
$(\mathcal{I},\tau)$-crowded.
\end{lemma}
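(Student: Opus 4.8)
The plan is to use a forcing/MA argument with a countable poset, mirroring the structure of the proof of Lemma~\ref{resolvability ideals}. The goal is to select finite sets $K_i \subseteq D_i$ so that $K = \bigcup_i K_i$ meets every basic open set in an $\mathcal{I}$-positive way. The natural conditions are finite approximations $p$ that assign to each $i$ in a finite set a finite subset $p(i) \subseteq D_i$; I order these by reverse inclusion (extension). For density I need, for each basic open $U$ (from a $\pi$-base $\mathcal{B}$ of size $<\mathfrak{m}_c$), each finite $F \subseteq X$, to be able to put new points of some $D_i \cap U$ outside $F$ into $K$ so that, as with the $\mathcal{C}_U \in C(\mathcal{I})$ from Lemma~\ref{solecki}, the trace $K \cap U$ escapes every $\mathcal{C}_U$ modulo finite and is therefore in $\mathcal{I}^+$.

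First I would, for each $U \in \mathcal{B}$ with $U \neq \emptyset$, fix $\mathcal{C}_U \in C(\mathcal{I})$ witnessing that $U \setminus F \notin \mathcal{C}_U$ for all finite $F$ (using that $U \in \mathcal{I}^+$ by $\mathcal{I}$-crowdedness and Lemma~\ref{solecki}). The dense sets to meet are, for each such $U$ and each finite $F$,
\[
\mathbb{D}(U,F)=\{p : \exists i,\ (p(i)\cap U\setminus F)\upn \cap \mathcal{C}_U=\emptyset\}.
\]
Density of $\mathbb{D}(U,F)$ is the routine part: given $q$, pick any $i$ larger than all indices used by $q$, note $D_i \cap U \setminus (F \cup \mathrm{something})$ is dense in $U$ hence $\mathcal{I}$-positive (as $D_i$ is dense and $(\mathcal{I},\tau)$-crowded), so it is not in $\mathcal{C}_U$; since $\mathcal{C}_U$ is closed and hereditary, extract a finite $E$ with $E\upn \cap \mathcal{C}_U = \emptyset$ and set $p(i) = E$. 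Since $\mathbb{P}$ is countable and $|\mathcal{B}|<\mathfrak{m}_c$, MA for countable posets yields a filter $G$ meeting all these dense sets; put $K_i = \bigcup_{p \in G} p(i)$ (a finite set, after also meeting dense sets that cap each $|K_i|$, or by arranging the poset so each coordinate is filled only once).

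The main obstacle I anticipate is keeping each $K_i$ \emph{finite} while still achieving density of $K = \bigcup_i K_i$, and it is precisely here that the hypothesis that $\mathcal{I}$ is $p^+$ must enter (it was not needed in Lemma~\ref{resolvability ideals}). The difficulty is that meeting $\mathbb{D}(U,F)$ for all $F$ naively forces infinitely many points into a single coordinate. The fix is to let each $\mathbb{D}(U,F)$ require that a \emph{fresh, large-index} coordinate absorbs the new finite piece, so that any fixed $K_i$ only ever grows finitely; this diagonalization is what distributes the density requirements across the decreasing sequence $\langle D_i \rangle$ and makes the $p^+$ property the right tool, since $p^+$ is exactly what lets a decreasing sequence of positive sets be captured by finitely-supported pieces. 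Once $K$ is shown to meet every $U \in \mathcal{B}$ in an $\mathcal{I}$-positive set, density and $(\mathcal{I},\tau)$-crowdedness of $K$ follow, because $\mathcal{B}$ is a $\pi$-base and by Remark~\ref{remark on ideals}(2), completing the proof.
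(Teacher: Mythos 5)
Your overall architecture (the countable poset of finite partial functions $p$ with $p(i)\subseteq D_i$, dense sets $\mathbb{D}(U,F)$ indexed by a small $\pi$-base, and an application of MA for countable posets) matches the paper's, but there is a genuine gap at the decisive step: the choice of $\mathcal{C}_U$. You fix $\mathcal{C}_U\in C(\mathcal{I})$ witnessing only that $U\setminus F\notin\mathcal{C}_U$ for all finite $F$, and then in the density argument you assert that $D_i\cap U\setminus F$, being $\mathcal{I}$-positive, ``is not in $\mathcal{C}_U$.'' That implication is false: members of $C(\mathcal{I})$ are merely closed hereditary families that almost contain each member of $\mathcal{I}$; they may very well contain $\mathcal{I}$-positive sets. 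Lemma \ref{solecki} says that for each positive set $A$ there exists \emph{some} $\mathcal{C}\in C(\mathcal{I})$ avoiding $A$ mod finite; a $\mathcal{C}_U$ chosen with reference to $U$ alone need not avoid the smaller positive sets $D_i\cap U$. Concretely, already for $\mathcal{I}=[X]^{<\omega}$ one can have $\mathcal{C}_U=\mathcal{P}(E)$ with $U\setminus E$ infinite (so $\mathcal{C}_U$ does witness positivity of $U$) while $D_i\cap U\subseteq E$; then no finite $E'\subseteq D_i\cap U\setminus F$ satisfies $E'\upn\cap\,\mathcal{C}_U=\emptyset$, and your sets $\mathbb{D}(U,F)$ are simply not dense in $\mathbb{P}$.

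This is also where you misplace the role of $p^+$. Keeping each $K_i$ finite is automatic from the poset (each coordinate is decided once by the filter, since conditions are partial functions ordered by extension); what $p^+$ actually buys is a single $\mathcal{C}_U$ that works \emph{uniformly in} $i$. In the paper's proof, since $\langle D_i\cap U\rangle_i$ is a decreasing sequence of $\mathcal{I}$-positive sets, $p^+$ yields a pseudo-intersection $D_U\in\mathcal{I}^+$ with $D_U\subseteq^* D_i\cap U$ for all $i$, and Lemma \ref{solecki} is applied to $D_U$ (not to $U$) to get $\mathcal{C}_U$ with $D_U\setminus F\notin\mathcal{C}_U$ for every finite $F$. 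Then, at a fresh coordinate $i$, the set $D_U\setminus(D_i\cap U)$ is finite, hence $D_U\cap D_i\cap U\setminus F\notin\mathcal{C}_U$, and a finite $E\subseteq D_i\cap U\setminus F$ with $E\upn\cap\,\mathcal{C}_U=\emptyset$ can be extracted. Your sketch gestures at the right formulation of $p^+$ (``a decreasing sequence of positive sets\dots'') but never takes the pseudo-intersection nor applies Lemma \ref{solecki} to it, so the final verification --- that $\bigcup_i K_i\cap U\in\mathcal{I}^+$, which requires all the finite pieces placed for $U$ at \emph{different} coordinates $i$ to escape the \emph{same} $\mathcal{C}_U$ --- cannot be completed as written.
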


\begin{proof}
Let $D=\bigcup_{i \in \omega}D_i$ and let $\mathbb{P}$ be the set
of all finite partial functions $p:\omega \to [D]^{<\omega}$ such
that $p(i) \subseteq D_i$ for all $i \in \dom p$. We order
$\mathbb{P}$ by reverse inclusion.

Fix $\mathcal{B}$ a $\pi$-base for $\tau$ with
$|\mathcal{B}|<\mathfrak{m}_c$. Since each $D_i$ is
$(\mathcal{I},\tau)$-crowded and dense, using that $\mathcal{I}$
is a $p^+$-ideal, we can find for each $U \in \mathcal{B}$ a set
$D_U \in \mathcal{I}^+$ such that $D_U \subseteq^* D_i \cap U$ for
all $i \in \omega$. Using Lemma \ref{solecki} we can fix
$\mathcal{C}_U \in C(\mathcal{I})$ such that $D_U \setminus F
\notin \mathcal{C}_U$ for any finite $F \subseteq X$.

For each $U \in \mathcal{B}$ and each finite $F \subseteq X$, the
set $$\mathbb{D}(U,F)=\{p \in \mathbb{P} : (\exists i \in \dom p)
\, (p(i) \cap U \setminus F)\upn \cap \mathcal{C}_U=\emptyset \}$$
is dense in $\mathbb{P}$. To see this, fix $U \in \mathcal{B}$, $F
\in [X]^{<\omega}$ and $q \in \mathbb{P}$. Choose $i \in \omega
\setminus \dom q$. Since $D_U \setminus (D_i \cap U)$ is finite we
have that $D_U \cap D_i \cap U \setminus F \notin \mathcal{C}_U$.
Using that $\mathcal{C}_U$ is closed and hereditary we can find a
finite $E \subseteq D_i \cap U \setminus F$ such that $E \upn \cap
\mathcal{C}_U = \emptyset$. But now $p=q \cup \{(i,E)\}$ is an
extension of $q$ that belongs to $\mathbb{D}(U,F)$.

Since $\mathbb{P}$ is countable and
$|\mathcal{B}|<\mathfrak{m}_c$, there exists a filter $G \subseteq
\mathbb{P}$ intersecting all the $\mathbb{D}(U,F)$ and also
intersecting the dense sets $\{p \in \mathbb{P} : i \in \dom p \}$
for $i \in \omega$. Thus $\bigcup G :\omega \to [D]^{<\omega}$ is
a total function and the sets $K_i=(\bigcup G)(i)$ for $i \in
\omega$ satisfy the conclusion of the lemma, since one can see
that $U \cap \bigcup_{i \in \omega}K_i \setminus F \notin
\mathcal{C}_U$ for all $F \in [X]^{<\omega}$ and all $U \in
\mathcal{B}$.
\end{proof}

Using the previous result with the ideal
$\mathcal{I}=[X]^{<\omega}$ we get:

\begin{corollary}
Any crowded space of $\pi$-weight smaller than $\mathfrak{m}_c$ is
selectively separable.
\end{corollary}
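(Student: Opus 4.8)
The plan is to apply Lemma \ref{SS ideals} to the ideal $\mathcal{I}=[X]^{<\omega}$ of finite subsets of $X$. First I would check that this ideal meets the hypotheses: it is HM, being Borel (indeed $F_\sigma$), as noted in the Preliminaries, and it is $p^+$ since $\mathcal{I}^+$ consists exactly of the infinite subsets of $X$, and any decreasing sequence of infinite sets admits an infinite pseudo-intersection by a routine diagonal choice (picking a new point from each $A_n$). Moreover, by the first item of Remark \ref{remark on ideals}, a topology is crowded precisely when it is $[X]^{<\omega}$-crowded, so the crowded space $(X,\tau)$ with $\pi w(\tau)<\mathfrak{m}_c$ fits the setting of the lemma.

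The next point I would establish is that, for this particular ideal, every dense subset $D$ of $X$ is automatically $(\mathcal{I},\tau)$-crowded. Indeed, fix $U\in\tau$ with $D\cap U\neq\emptyset$; since $\tau$ is crowded, $U$ is infinite, and since $X$ is $T_1$ we may delete any finite subset of $U$ and still have a nonempty open set, so density of $D$ forces $D\cap U$ to be infinite, i.e. $D\cap U\in\mathcal{I}^+$. Thus in this case a dense subset is exactly a dense $(\mathcal{I},\tau)$-crowded subset, and Lemma \ref{SS ideals} applies to any decreasing sequence of dense sets.

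It remains to reduce an arbitrary sequence $(D_n)_n$ of dense sets, as required by the definition of SS, to a decreasing one. Here I would set $E_n=\bigcup_{k\geq n}D_k$; each $E_n$ is dense (it contains the dense set $D_n$) and the sequence $(E_n)_n$ is decreasing, so Lemma \ref{SS ideals} yields finite $K_n\subseteq E_n$ with $\bigcup_n K_n$ dense. Finally I would redistribute these points over the original sets: to each $x\in K_n$ assign an index $j(x)\geq n$ with $x\in D_{j(x)}$, and put $F_j=\{x\in\bigcup_n K_n : j(x)=j\}$. Since $j(x)\geq n$ whenever $x\in K_n$, each $F_j$ is contained in $\bigcup_{n\leq j}K_n$ and hence finite, $F_j\subseteq D_j$, and $\bigcup_j F_j=\bigcup_n K_n$ is dense, witnessing selective separability.

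I expect the only genuinely delicate steps to be the verification that dense sets are $(\mathcal{I},\tau)$-crowded (which is where $T_1$ and crowdedness are used) and the bookkeeping in the final redistribution; the invocation of Lemma \ref{SS ideals} itself, once $\mathcal{I}=[X]^{<\omega}$ is fixed, is immediate.
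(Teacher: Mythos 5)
Your proposal is correct and follows exactly the route the paper intends: apply Lemma \ref{SS ideals} with $\mathcal{I}=[X]^{<\omega}$, which is the paper's entire (one-line) proof. The details you supply --- that $[X]^{<\omega}$ is HM and $p^+$, that in a crowded $T_1$ space dense sets are automatically $([X]^{<\omega},\tau)$-crowded, and the reduction of an arbitrary sequence of dense sets to a decreasing one with the final redistribution --- are precisely the routine verifications the paper leaves implicit, and they are all carried out correctly.
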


Now we show an analogous result concerning the property $q^+$.

\begin{lemma}\label{q ideals}
Let $\mathcal{I}$ be a $q^+$ and HM ideal on $X$ and $\tau$ an
$\mathcal{I}$-crowded topology on $X$ with $w(\tau)<
\mathfrak{m}_c$. If $\langle F_i : i \in \omega \rangle$ is a
sequence of pairwise disjoint finite subsets of $X$ such that
$\bigcup_{i \in \omega}F_i$ is $(\mathcal{I},\tau)$-crowded, then
there is an $(\mathcal{I},\tau)$-crowded $S \subseteq \bigcup_{i
\in \omega}F_i$ such that $|S \cap F_i| \leq 1$ for each $i \in
\omega$ and $\overbar{S}=\overbar{\bigcup_{i \in \omega}F_i}$.
\end{lemma}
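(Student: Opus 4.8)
The plan is to produce $S$ by a Martin's-axiom argument over a countable poset of finite partial selectors, exactly parallel to the proof of Lemma~\ref{SS ideals} but with the use of the $p^+$-property replaced by $q^+$. Write $A=\bigcup_{i\in\omega}F_i$ and fix a base $\mathcal{B}$ for $\tau$ with $|\mathcal{B}|<\mathfrak{m}_c$. Let $\mathbb{P}$ be the set of finite partial functions $p:\omega\to X$ with $p(i)\in F_i$ for each $i\in\dom p$, ordered by reverse inclusion; then $\mathbb{P}$ is countable, and a filter meeting the dense sets $\{p:i\in\dom p\}$ for $i\in\omega$ yields a total selector $g=\bigcup G$, whose range $S=\rng(g)$ automatically satisfies $|S\cap F_i|\le 1$ (indeed $=1$) since the $F_i$ are pairwise disjoint.

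First I would prepare, for each $U\in\mathcal{B}$ with $A\cap U\neq\emptyset$, a positive ``target''. Since $A$ is $(\mathcal{I},\tau)$-crowded we have $A\cap U\in\mathcal{I}^+$, and $(F_i\cap U)_i$ partitions $A\cap U$ into finite pieces; applying that $\mathcal{I}$ is $q^+$ gives $S_U\in\mathcal{I}^+$ with $S_U\subseteq A\cap U$ and $|S_U\cap F_i|\le 1$ for all $i$. Then, as before, Lemma~\ref{solecki} provides $\mathcal{C}_U\in C(\mathcal{I})$ with $S_U\setminus F\notin\mathcal{C}_U$ for every finite $F$. The point of this step is that $S_U$ is itself a selector, so any finite $E\subseteq S_U$ meets each $F_i$ in at most one point and can therefore be absorbed into a condition of $\mathbb{P}$.

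The core step is to show that for each such $U$ and each finite $F\subseteq X$ the set
$$
\mathbb{D}(U,F)=\{p\in\mathbb{P}:(\rng(p)\cap U\setminus F)\upn\cap\mathcal{C}_U=\emptyset\}
$$
is dense. Given $q\in\mathbb{P}$, I would set $F'=F\cup\bigcup_{i\in\dom q}F_i$; since $S_U\setminus F'\notin\mathcal{C}_U$ and $\mathcal{C}_U$ is closed and hereditary, there is a finite $E\subseteq S_U\setminus F'$ with $E\upn\cap\mathcal{C}_U=\emptyset$. Because $E\subseteq S_U$ picks at most one point from each $F_i$ and avoids the levels already used by $q$, the points of $E$ define an extension $p\supseteq q$ in $\mathbb{P}$ with $E\subseteq\rng(p)\cap U\setminus F$; then $(\rng(p)\cap U\setminus F)\upn\subseteq E\upn$ is disjoint from $\mathcal{C}_U$, so $p\in\mathbb{D}(U,F)$.

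Finally, since there are fewer than $\mathfrak{m}_c$ many dense sets (the $\mathbb{D}(U,F)$ together with the domain-sets), MA for countable posets gives a filter $G$ meeting all of them, and I set $S=\rng(\bigcup G)$. To verify the conclusion: for $U\in\mathcal{B}$ with $A\cap U\neq\emptyset$ and any finite $F$, picking $p\in G\cap\mathbb{D}(U,F)$ and using $\rng(p)\subseteq S$ shows $S\cap U\setminus F\notin\mathcal{C}_U$; as this holds for all finite $F$ and $\mathcal{C}_U\in C(\mathcal{I})$, we get $S\cap U\in\mathcal{I}^+$. Since $S\subseteq A$ and $\mathcal{B}$ is a base, this yields both that $S$ is $(\mathcal{I},\tau)$-crowded and that $S$ meets every basic open set that $A$ meets, whence $\overbar{S}=\overbar{A}$. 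The main obstacle is precisely the synchronization handled by the forcing: each $S_U$ individually witnesses positivity inside $U$, yet a single selector $S$ must be positive against \emph{every} $\mathcal{C}_U$ at once while never exceeding one point per $F_i$. The observation that finite subsets of a selector are again selectors is exactly what makes the genericity argument compatible with the $q^+$ constraint.
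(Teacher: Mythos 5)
Your proposal is correct and takes essentially the same approach as the paper's proof: a countable poset of finite partial selectors, a target $S_U\in\mathcal{I}^+$ inside each basic open $U$ supplied by the $q^+$ property of $\mathcal{I}$, the closed hereditary families $\mathcal{C}_U$ from Lemma~\ref{solecki}, and dense sets $\mathbb{D}(U,F)$ met by a generic filter. If anything, your density argument is more careful than the paper's: by excluding all of $\bigcup_{i\in\dom q}F_i$ (not merely the finitely many points of the condition $q$) you guarantee that the extension is still a selector, a detail that the paper's step $p=q\cup s$ with $s\subseteq S_U\setminus F\setminus q$ leaves implicit.
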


\begin{proof}
Just for the purpose of this proof let us say that a subset of $X$
is a \emph{selector} if it intersects each $F_i$ in at most one
point. Let $\mathbb{P}$ be the set of all finite selectors $p
\subseteq \bigcup_{i \in \omega}F_i$ ordered by reverse inclusion.
Also fix $\mathcal{B}$ a base for $\tau$ with
$|\mathcal{B}|<\mathfrak{m}_c$.

For each $U \in \mathcal{B}$ which intersects $\bigcup_{i \in
\omega}F_i$ we have that $(\bigcup_{i \in \omega}F_i) \cap U \in
\mathcal{I}^+$ and since $\mathcal{I}$ is a $q^+$-ideal there is a
selector $S_U \subseteq (\bigcup_{i \in \omega}F_i) \cap U$ with
$S_U \in \mathcal{I}^+$. Using Lemma \ref{solecki} we can fix
$\mathcal{C}_U \in C(\mathcal{I})$ such that $S_U \setminus F
\notin \mathcal{C}_U$ for any finite $F \subseteq X$. Now for each
$F \in [X]^{<\omega}$ the set $$\mathbb{D}(U,F)=\{p \in \mathbb{P}
: (p \cap U \setminus F)\upn \cap \mathcal{C}_U = \emptyset\}$$ is
dense in $\mathbb{P}$.

To see this, fix $U,F$ as before and $q \in \mathbb{P}$. Since
$S_U \setminus F \setminus q  \notin \mathcal{C}_U$ and
$\mathcal{C}_U$ is closed and hereditary, we can find a finite $s
\subseteq S_U \setminus F \setminus q$ such that $s\upn \cap
\mathcal{C}_U = \emptyset$. But now $p=q \cup s$ is an extension
of $q$ that belongs to $\mathbb{D}(U,F)$.

Since $\mathbb{P}$ is countable and
$|\mathcal{B}|<\mathfrak{m}_c$, there exists a filter $G \subseteq
\mathbb{P}$ intersecting all the $\mathbb{D}(U,F)$ and we can let
$S=\bigcup G$. Clearly $S$ is a selector.

To see that $S$ is $(\mathcal{I},\tau)$-crowded and
$\overbar{S}=\overbar{\bigcup_{i \in \omega}F_i}$, fix $U \in
\mathcal{B}$ which intersects $\bigcup_{i \in \omega}F_i$ and a
finite $F \subseteq X$. Then there is a $p \in G$ such that $(p
\cap U \setminus F)\upn \cap \mathcal{C}_U = \emptyset$ and
therefore $S \cap U \setminus F \notin \mathcal{C}_U$. Since this
is true for any finite $F$ and $\mathcal{C}_U$ belongs to
$C(\mathcal{I})$, it follows that $S \cap U \notin \mathcal{I}$.
\end{proof}

Using $\mathcal{I}=[X]^{<\omega}$ again, we obtain:

\begin{corollary}
Any space is $q^+$ at each point of character smaller than
$\mathfrak{m}_c$.
\end{corollary}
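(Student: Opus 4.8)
The plan is to run the argument of Lemma \ref{q ideals} with the trivial ideal $\mathcal{I}=[X]^{<\omega}$, but \emph{localized} at the point $x$. The crucial observation is that the $q^+$ condition at $x$ refers only to the neighborhood filter of $x$, so only a neighborhood base at $x$ -- of size $\chi(x,X)<\mathfrak{m}_c$ -- needs to enter the construction, even though the weight of the whole space may be arbitrarily large. This is why the statement cannot be obtained as a verbatim black-box consequence of Lemma \ref{q ideals}, whose hypothesis is the \emph{global} bound $w(\tau)<\mathfrak{m}_c$: here we only control the character of a single point, and we must re-run the proof with a local base. I would first dispose of the degenerate cases. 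If $x\in\overline{\bigcup_{n\le N}F_n}$ for some $N$, then $x$ lies in the closure of a finite set, so $T_1$-ness gives $x\in\bigcup_{n\le N}F_n$, and (the $F_n$ being pairwise disjoint, as in Lemma \ref{q ideals}) $S=\{x\}$ already works. Otherwise $x\notin A:=\bigcup_n F_n$ and, again by $T_1$-ness, every neighborhood $U$ of $x$ meets $A$ in an infinite set, i.e. $A\cap U\in([X]^{<\omega})^+$; in particular $A\cap U$ meets infinitely many of the $F_n$.

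Next I would set up the engine exactly as in Lemma \ref{q ideals}. Fix a neighborhood base $\mathcal{B}_x$ at $x$ with $|\mathcal{B}_x|<\mathfrak{m}_c$ and let $\mathbb{P}$ be the countable poset of finite selectors $p\subseteq A$ (so $|p\cap F_n|\le 1$ for every $n$) ordered by reverse inclusion. For each $U\in\mathcal{B}_x$ the set $\mathbb{D}(U)=\{p\in\mathbb{P}: p\cap U\neq\emptyset\}$ is dense: given $q\in\mathbb{P}$, since $A\cap U$ meets infinitely many $F_n$ one can pick a point $a\in A\cap U$ lying in some $F_n$ not yet represented in $q$, and then $q\cup\{a\}\in\mathbb{D}(U)$ is a selector extending $q$. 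For $\mathcal{I}=[X]^{<\omega}$ the whole apparatus of Lemma \ref{solecki} and the families $\mathcal{C}_U\in C(\mathcal{I})$ collapses to this: one may take the infinite selector $S_U\subseteq A\cap U$ and $\mathcal{C}_U=\mathcal{P}(X\setminus S_U)$, which is closed and hereditary, whereupon $(p\cap U)^{\uparrow}\cap\mathcal{C}_U=\emptyset$ reduces to $p\cap U\neq\emptyset$. Since $\mathbb{P}$ is countable and $|\mathcal{B}_x|<\mathfrak{m}_c$, $\mathrm{MA}$ for countable posets yields a filter $G$ meeting every $\mathbb{D}(U)$; put $S=\bigcup G$. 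Pairwise compatibility of conditions in $G$ forces $S$ to be a selector (two points of $S$ in a common $F_n$ would lie in a single condition below both, contradicting that conditions are selectors), and meeting each $\mathbb{D}(U)$ gives $S\cap U\neq\emptyset$ for every $U\in\mathcal{B}_x$, i.e. $x\in\overline{S}$, as required.

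The step I expect to carry the real weight is the localization itself, namely recognizing that $T_1$-ness supplies positivity $A\cap U\in([X]^{<\omega})^+$ at \emph{every} neighborhood of $x$, and that this is all one needs so that only the $<\mathfrak{m}_c$-many dense sets indexed by $\mathcal{B}_x$ -- rather than a global base -- have to be handled. In effect the proof is a faithful specialization of Lemma \ref{q ideals} to $\mathcal{I}=[X]^{<\omega}$, with the single but essential modification that the base $\mathcal{B}$ of the whole space is replaced by a neighborhood base at $x$; the disjointness of the $F_n$ then makes the density of the $\mathbb{D}(U)$ immediate, so no further combinatorial difficulty arises.
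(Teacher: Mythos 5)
Your argument is correct, and at its core it runs the same engine as the paper's proof: the countable poset of finite selectors, dense sets indexed by a local base at $x$ of size $<\mathfrak{m}_c$, and a filter given by MA for countable posets whose union is the desired selector $S$ clustering at $x$. Where you go astray is in the meta-claim that the corollary ``cannot be obtained as a verbatim black-box consequence of Lemma \ref{q ideals}'': that is exactly how the paper proves it. Given $x\in\overline{\bigcup_n F_n}\setminus\bigcup_n F_n$ and a local base $\mathcal{B}$ at $x$ with $|\mathcal{B}|<\mathfrak{m}_c$, the paper equips the countable set $Z=\{x\}\cup\bigcup_n F_n$ with the topology $\sigma$ generated by $\{U\cap Z : U\in\mathcal{B}\}\cup\{\bigcup_n F_n\}$. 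This $\sigma$ has weight $<\mathfrak{m}_c$; every nonempty basic $\sigma$-open set contains a set of the form $U\cap\bigcup_n F_n$ with $U\in\mathcal{B}$, and these are infinite by the very same $T_1$ observation you make, so $\sigma$ is $[Z]^{<\omega}$-crowded and $\bigcup_n F_n$ is $([Z]^{<\omega},\sigma)$-crowded. Lemma \ref{q ideals} applied to $(Z,\sigma)$ with the ideal $[Z]^{<\omega}$ then hands over a selector $S$ with $cl_\sigma(S)=cl_\sigma\left(\bigcup_n F_n\right)\ni x$, and since the traces $U\cap Z$, $U\in\mathcal{B}$, generate the $\sigma$-neighborhoods of $x$, it follows that $x\in\overline{S}$ in the original space. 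So the localization you perform \emph{inside} the forcing, the paper performs in the \emph{choice of the space} to which the lemma is applied; the two proofs use the same poset and essentially the same dense sets, only packaged differently. What your version buys is self-containedness (no need to verify the lemma's hypotheses for an auxiliary topology); what the paper's buys is brevity. Two minor points: your side remark that for $\mathcal{I}=[X]^{<\omega}$ one may take $\mathcal{C}_U=\mathcal{P}(X\setminus S_U)$, so that $(p\cap U)\upn\cap\mathcal{C}_U=\emptyset$ ``reduces to $p\cap U\neq\emptyset$'', is slightly off --- it reduces to $p\cap U\cap S_U\neq\emptyset$ --- but this is harmless since your actual density argument never uses $\mathcal{C}_U$; and your reading of the definition of $q^+$ at a point as referring to pairwise disjoint $F_n$ agrees with the paper's own tacit usage, since its proof also goes through Lemma \ref{q ideals}, whose hypothesis is disjointness.
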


\begin{proof}
Let $Y$ be a space, $y \in Y$, $\mathcal{B}$ a local base at $y$
with $|\mathcal{B}|< \mathfrak{m}_c$ and suppose that $y \in
\overline{\bigcup_{n \in \omega} F_n} \setminus \bigcup_{n \in
\omega} F_n $ where each $F_n$ is a finite subset of $Y$. Use the
previous lemma with the space $X=\{y\} \cup \bigcup_{n \in \omega}
F_n$ with the topology generated by $\{U \cap Y : U \in
\mathcal{B}\} \cup \{\bigcup_{n \in \omega} F_n\}$ and the ideal
$[X]^{<\omega}$.
\end{proof}

\section{Maximal $\mathcal{I}$-crowded topologies}

Depending on the ideal $\mathcal{I}$, there might or might not
exist a nice $\mathcal{I}$-crowded topology on $X$. If
$\mathcal{I}=\{\emptyset\}$ then any topology on $X$ is
$\mathcal{I}$-crowded, but if $\mathcal{I}=\mathcal{P}(X)$ then
only the trivial topology is $\mathcal{I}$-crowded. For
$\mathcal{I}=[X]^{<\omega}$, a topology on $X$ is
$\mathcal{I}$-crowded if and only if it is crowded. If
$\mathcal{I}$ is a prime ideal then $\mathcal{I}^+ \cup
\{\emptyset\}$ is the finest $\mathcal{I}$-crowded topology on $X$
and it is not Hausdorff. Finally, for meager ideals we have the
following:

\begin{theorem}\label{homeomorphism}
Let $\mathcal{I}$ be a meager ideal on $X$ and let $(Y,\rho)$ be a
crowded space with $\pi w(\rho)<\mathfrak{m}_c$. Then there exists
an $\mathcal{I}$-crowded topology $\tau$ on $X$ such that
$(X,\tau)$ is homeomorphic to $(Y,\rho)$.
\end{theorem}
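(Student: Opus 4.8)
The plan is to realize the homeomorphism as a generic bijection between $X$ and $Y$, built by a forcing argument with a countable poset, and to read the $\mathcal{I}$-crowdedness off the genericity. First I would fix a $\pi$-base $\mathcal{B}$ for $\rho$ with $|\mathcal{B}|=\pi w(\rho)<\mathfrak{m}_c$. The central reduction is that, for any bijection $f:X\to Y$, the transported topology $\tau=\{f^{-1}(V):V\in\rho\}$ is automatically homeomorphic to $\rho$, and it is $\mathcal{I}$-crowded precisely when $f^{-1}(W)\in\mathcal{I}^+$ for every $W\in\mathcal{B}$: indeed, every non-empty $V\in\rho$ contains some $W\in\mathcal{B}$, so $f^{-1}(W)\subseteq f^{-1}(V)$, and heredity of $\mathcal{I}$ propagates positivity upward. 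Thus the whole problem becomes: produce a bijection sending, for each $W$ in the $\pi$-base, an $\mathcal{I}$-positive set into $W$.

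To control positivity uniformly I would invoke the Jalali-Naini--Talagrand characterization of meager ideals: since $\mathcal{I}$ is meager (and free, hence containing $[X]^{<\omega}$), there is a partition $X=\bigsqcup_{n\in\omega}I_n$ into non-empty finite blocks such that any $A\subseteq X$ with $I_n\subseteq A$ for infinitely many $n$ lies in $\mathcal{I}^+$. This is exactly the device that lets me deal with all $\pi$-base elements at once: it suffices to arrange that each $f^{-1}(W)$ swallows infinitely many whole blocks $I_n$.

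Now let $\mathbb{P}$ be the countable poset of finite partial injections $p:X\to Y$ ordered by reverse inclusion. I would use three families of dense sets: $E_x=\{p:x\in\dom p\}$ for $x\in X$ and $R_y=\{p:y\in\rng p\}$ for $y\in Y$ (forcing $f=\bigcup G$ to be a total surjection, hence a bijection, since each condition is injective), together with, for each $W\in\mathcal{B}$ and $k\in\omega$, the set $D_{W,k}=\{p:\exists n\ge k\ (I_n\subseteq\dom p\ \text{and}\ p(I_n)\subseteq W)\}$. Genericity against the $E_x$ and $R_y$ gives the bijection, and meeting every $D_{W,k}$ makes $\{n:I_n\subseteq f^{-1}(W)\}$ infinite, whence $f^{-1}(W)\in\mathcal{I}^+$ by the previous paragraph. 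Counting, there are $\aleph_0+\aleph_0+|\mathcal{B}|\cdot\aleph_0<\mathfrak{m}_c$ dense sets, so MA for countable posets yields the required filter $G$, and $\tau$ is then the transported topology.

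The step that needs genuine care is the density of $D_{W,k}$. Given $q\in\mathbb{P}$ I would pick a block $I_n$ with $n\ge k$ disjoint from the finite set $\dom q$, and then inject $I_n$ into $W$ while avoiding the finite set $\rng q$; this is possible because $Y$ is crowded and $T_1$, so every non-empty open $W$ is infinite (a finite non-empty open set in a $T_1$ space would produce an isolated point). Hence the extension of $q$ by any such injection of $I_n$ into $W\setminus\rng q$ is a legitimate condition in $D_{W,k}$. Everything else is routine verification, so I expect the only real obstacles to be packaging the meager-ideal characterization correctly and checking this density step.
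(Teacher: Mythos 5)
Your proposal is correct and follows essentially the same route as the paper's proof: the same countable poset of finite partial injections, the same three families of dense sets (totality, surjectivity, and the Jalali-Naini--Talagrand block-capturing sets), and the same transported topology $\tau=\{f^{-1}(V):V\in\rho\}$. Your write-up in fact makes explicit two small points the paper leaves implicit, namely that $\mathcal{I}$-crowdedness need only be checked on the $\pi$-base (heredity pushes positivity upward) and that non-empty open subsets of a crowded $T_1$ space are infinite, which justifies the density step.
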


\begin{proof}
By the Jalali-Naini--Talagrand Theorem (see, for instance,
\cite[p. 33]{todortopics}), we can fix a partition $X=\bigcup_{n
\in \omega}F_n$ into finite subsets such that no element of
$\mathcal{I}$ contains infinitely many of the $F_n$'s. Lets also
fix a $\pi$-base $\mathcal{B}$ for $\rho$ with $|\mathcal{B}|<
\mathfrak{m}_c$.

Let $\mathbb{P}$ be the set of all finite partial injective
functions $p:X \to Y$ ordered by reverse inclusion. For each $U
\in \mathcal{B}$ and $k \in \omega$ the set $$\mathbb{D}(U,k)=\{p
\in \mathbb{P}: F_n \subseteq p^{-1}(U) \mbox{ for some }n \geq
k\}$$ is dense in $\mathbb{P}$. To see this fix $U \in
\mathcal{B}$, $k \in \omega$ and $q \in \mathbb{P}$. Find $n \geq
k$ such that $F_n \cap \dom q =\emptyset$ and choose any $E
\subseteq U \setminus \rng q$ with $|E|=|F_n|$ and a bijection
$r:F_n \to E$. Now $p=q \cup r$ is an extension of $q$ that
belongs to $\mathbb{D}(U,k)$.

Since $\mathbb{P}$ is countable and $|\mathcal{B}| <
\mathfrak{m}_n$, there is a filter $G \subseteq \mathbb{P}$
intersecting all the $\mathbb{D}(U,k)$ and also intersecting the
dense sets $\{p \in \mathbb{P}: x \in \dom p\}$ for $x \in X$ and
$\{p \in \mathbb{P}: y \in \rng p \}$ for $y \in Y$. Then
$f=\bigcup G : X \to Y$ is a bijection and for each $U \in
\mathcal{B}$ we have $f^{-1}(U) \in \mathcal{I}^+$. Therefore if
we let $\tau= \{f^{-1}(V) : V \in \rho \}$ we get an
$\mathcal{I}$-crowded topology on $X$ and $f$ is a homeomorphism
between $(X,\tau)$ and $(Y, \rho)$.
\end{proof}

\begin{question}
Can we remove the hypothesis on the $\pi$-weight of $\rho$?
\end{question}

We need one more lemma before we state and prove our main result.

\begin{lemma}\label{nocermax ideals}
Let $X$ be a maximal space, $A \subseteq X$ and $x \in X$. If $x
\in \overline{ A }\setminus A$, then there is  $B \subseteq A$
without isolated points such that  $x \in \overline{B}$.
\end{lemma}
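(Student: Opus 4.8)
The plan is to use the Cantor--Bendixson decomposition of the subspace $A$ together with the fact that, in a maximal space, scattered subsets are nowhere dense. Write $A = B \cup S$, where $B$ is the perfect kernel of $A$ (the largest subset of $A$ with no isolated points in the subspace topology) and $S = A \setminus B$ is the scattered part. Then $B$ has no isolated points by construction, so it remains only to check that $x \in \overline{B}$; and for this it suffices to show that $x \notin \overline{S}$, since $\overline{A} = \overline{B} \cup \overline{S}$ and $x \in \overline{A}$.

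To prove $x \notin \overline{S}$ I will show that $S$ is closed: as $S \subseteq A$ and $x \notin A$, this gives $\overline{S} = S \not\ni x$ immediately. By the nodec property of maximal spaces, it is enough to show that $S$ is nowhere dense, and since $S$ is scattered this follows from the key claim that \emph{every scattered subspace of a maximal space is nowhere dense}.

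The heart of the matter is the following engine lemma: \emph{every discrete subspace $D$ of a maximal space $X$ is nowhere dense}. To see this, suppose toward a contradiction that $V := \mathrm{int}(\overline{D})$ is nonempty. Since $V \subseteq \overline{D}$, the set $D \cap V$ is dense in $V$; and because $V$ is open in the crowded space $X$ it is itself crowded, so no nonempty open subset of $V$ can be contained in the discrete set $D$ (such an open set would contain a point isolated in $X$). Hence $V \setminus D$ is also dense in $V$, exhibiting $V$ as the union of two disjoint dense subsets, which contradicts the fact that in a maximal space every open set is irresolvable. The key claim then follows: if $S$ is scattered, its set $I$ of isolated points is dense in $S$ and is a discrete subspace of $X$; were $S$ somewhere dense, then $I$ would be somewhere dense as well, contradicting the engine lemma.

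I expect the engine lemma --- that discrete subsets are nowhere dense --- to be the main obstacle, since it is exactly where the irresolvability of open sets (and the crowdedness of $X$) must be invoked; the rest is the standard Cantor--Bendixson bookkeeping together with the nodec property. Finally, $B \subseteq A$, $B$ has no isolated points, and $x \in \overline{B}$ (in particular $B \neq \emptyset$), which is the required conclusion. Note that this argument uses only that $X$ is crowded, nodec, and has every open set irresolvable, and does not need extremal disconnectedness.
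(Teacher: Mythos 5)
Your proof is correct, but it runs on heavier machinery than the paper's. The paper performs a single Cantor--Bendixson step: let $D$ be the set of isolated points of $A$; then $D$ is discrete, hence closed in a maximal space (this is exactly your engine lemma combined with the nodec property), and $B=A\setminus D$ already works, since $\overline{A}=\overline{B}\cup D$ forces $x\in\overline{B}$, while closedness of $D$ guarantees that deleting it creates no new isolated points (if $U\cap B=\{b\}$ with $U$ open, then $U\setminus D$ is an open set witnessing that $b$ was isolated in $A$), so $B$ is dense-in-itself after one step. You instead take $B$ to be the perfect kernel, which makes ``no isolated points'' automatic but obliges you to show that the entire scattered remainder $S$ is closed; that costs you the extra lemma that scattered subspaces are nowhere dense, reduced to the discrete case via density of the isolated points of $S$. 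Both arguments rest on the same engine --- discrete subsets of a crowded nodec space are nowhere dense, hence closed --- which you derive from irresolvability; note that it already follows from $T_1$ plus crowdedness alone: with $V=\mathrm{int}(\overline{D})$ and $U_d\cap D=\{d\}$, density of $D\cap V$ in $V$ makes $\{d\}$ dense in the open set $U_d\cap V$, so $U_d\cap V=\{d\}$ by $T_1$, contradicting crowdedness. Incidentally, in this setting the two decompositions coincide: since $D$ is closed, $A\setminus D$ is already dense-in-itself, so the Cantor--Bendixson process terminates after one step and your scattered part $S$ is precisely $D$. What your version buys is self-containedness (the engine lemma is proved rather than quoted) and a slightly more general key claim; what the paper's buys is brevity, needing only the one-line observation that discrete sets are closed.
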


\begin{proof}
Let $D=\{y \in A: y \text{ is an isolated point of }A \}$. Since
$D$ is discrete and $X$ is maximal, then $D$ is closed. Let
$B=A\setminus D$. We have $x \in \overline{ A } = \overline{
(A\setminus D) \cup D }=\overline{ B} \cup D$. Thus  $x \in
\overline{B}$. It is easy to see that $B$  has no isolated points.
\end{proof}

Now we are ready for the main result of this paper.

\begin{theorem}[$\mathfrak{m}_c=\mathfrak{c}$]\label{main}
For any HM ideal $\mathcal{I}$ on $X$ there exists a maximal,
zero-dimensional and $\mathcal{I}$-crowded topology
$\tau^\mathcal{I}$ on $X$ such that:
\begin{enumerate}
    \item[(i)] If $\mathcal{I}$ is $p^+$ then $\tau^\mathcal{I}$ is selectively separable.
    \item[(ii)] If $\mathcal{I}$ is $q^+$ then $\tau^\mathcal{I}$ is $q^+$.
\end{enumerate}
\end{theorem}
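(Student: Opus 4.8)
The plan is to construct $\tau^{\mathcal I}$ by a transfinite recursion of length $\mathfrak c=\mathfrak m_c$, building an increasing chain $(\tau_\alpha)_{\alpha<\mathfrak c}$ of zero-dimensional $\mathcal I$-crowded topologies, each of weight $<\mathfrak c$, and setting $\tau^{\mathcal I}=\bigcup_{\alpha<\mathfrak c}\tau_\alpha$. Remark \ref{remark on ideals}(3) guarantees that the union generates an $\mathcal I$-crowded topology, and since each application of Corollary \ref{extension ideals} preserves weight while we perform fewer than $\mathfrak c$ steps, every $\tau_\alpha$ will have weight $<\mathfrak c=\mathfrak m_c$, so the small-weight hypotheses of the lemmas of Section 3 remain available at each stage. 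I would start from any countable zero-dimensional $\mathcal I$-crowded topology (for instance one produced by Theorem \ref{homeomorphism}) and use a bookkeeping that, along the $\mathfrak c$ stages, presents every subset $A\subseteq X$ (for maximality), every $\omega$-sequence of subsets (for selective separability), and every pair consisting of a point and a sequence of finite sets (for $q^+$).

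For maximality I would use the characterization that a crowded $T_1$ topology is maximal iff every dense-in-itself subspace is already open: indeed $\langle\tau\cup\{A\}\rangle$ is crowded exactly when $A$ has no isolated point in its subspace topology. Both \emph{being open} and \emph{having an isolated point in the subspace topology} are preserved under refinement, so it suffices to secure, at the stage where a subset $A$ is presented, that $A$ is open or has an isolated point. If $A$ is already resolved we do nothing. Otherwise $A$ is dense-in-itself in $\tau_\alpha$; if it is $(\mathcal I,\tau_\alpha)$-crowded we open it via Corollary \ref{extension ideals}. If not, there is $U\in\tau_\alpha$ with $\emptyset\neq U\cap A\in\mathcal I$, and picking $x\in U\cap A$ the set $B=(U\setminus A)\cup\{x\}$ is $(\mathcal I,\tau_\alpha)$-crowded (for open $W$, $B\cap W$ is empty when $W\cap U=\emptyset$ and otherwise contains the $\mathcal I$-positive set $(W\cap U)\setminus A$), so Corollary \ref{extension ideals} makes $B$ open and then $B\cap A=\{x\}$ exhibits an isolated point of $A$. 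Since refinement destroys neither feature, in $\tau^{\mathcal I}$ every subset is open or has an isolated point, whence $\tau^{\mathcal I}$ is maximal; it is dense-in-itself because $X$ is $(\mathcal I,\tau_\alpha)$-crowded throughout.

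For the selection properties the engine is the pair ``selection lemma plus freezing''. For (i), when a sequence is presented I would pass to the decreasing sequence of intersections and, assuming its terms are $(\mathcal I,\tau_\alpha)$-crowded and dense, apply Lemma \ref{SS ideals} to obtain finite $K_i\subseteq D_i$ with $K=\bigcup_i K_i$ dense and $(\mathcal I,\tau_\alpha)$-crowded. I would then apply Lemma \ref{discrete ideals} to $D=K$; by the remark following that lemma, $cl_{\tilde\tau}(K)=X$ for every crowded $\tilde\tau\supseteq\tau'$, so $K$ stays dense in $\tau^{\mathcal I}$, giving exactly the finite selection demanded by SS. For (ii) the scheme is identical with Lemma \ref{q ideals} in place of Lemma \ref{SS ideals}: from a $(\mathcal I,\tau_\alpha)$-crowded set $\bigcup_i F_i$ with $x\in cl_{\tau_\alpha}(\bigcup_i F_i)$ it produces a selector $S$ with $\overbar{S}=\overbar{\bigcup_i F_i}$, and freezing $cl(S)$ with Lemma \ref{discrete ideals} keeps $x\in cl_{\tau^{\mathcal I}}(S)$; as $|S\cap F_i|\le 1$, this witnesses $q^+$ at $x$.

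The main obstacle is making these selection arguments address \emph{every} relevant configuration of the \emph{final} topology rather than of some intermediate $\tau_\alpha$. Here two facts cooperate. First, density is inherited downward along the chain: a set dense in $\tau^{\mathcal I}$ is dense in every $\tau_\alpha$, so each troublesome sequence is already dense at the stage where it is presented. Second, and this is the delicate point, the selection lemmas require $(\mathcal I,\tau)$-crowded witnesses, whereas a set that is dense (or has $x$ in its closure) in the final space need not be $(\mathcal I,\tau)$-crowded and may even lie in $\mathcal I$. Bridging this gap is where the work concentrates: one must argue that, once the maximality requirements are in place, the configurations to be treated can be replaced by $(\mathcal I,\tau_\alpha)$-crowded ones witnessing the same density, using Lemma \ref{nocermax ideals} to extract crowded subsets with the same closure and the already-built maximal structure to upgrade ordinary crowdedness to $(\mathcal I,\tau_\alpha)$-crowdedness. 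Getting this reduction to dovetail with the freezing steps—so that freezing one selector does not spoil a density still to be secured—is the part I expect to require the most care.
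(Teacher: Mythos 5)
Your proposal is essentially the paper's own proof: the same starting topology from Theorem \ref{homeomorphism}, the same $\mathfrak{c}$-stage bookkeeping with conditional actions at each stage, the same isolated-point trick for maximality (the paper opens $X\setminus(A\cap U)\cup\{x\}$ where you open $(U\setminus A)\cup\{x\}$, an immaterial difference), and the same select-then-freeze mechanism combining Lemmas \ref{SS ideals} and \ref{q ideals} with Lemma \ref{discrete ideals}. The point you defer as delicate is resolved exactly as you indicate, and more simply than you fear: a set dense in $\tau^\mathcal{I}$ is $\tau^\mathcal{I}$-open by maximality, hence $(\mathcal{I},\tau^\mathcal{I})$-crowded because $\tau^\mathcal{I}\cap\mathcal{I}=\{\emptyset\}$, hence $(\mathcal{I},\tau_\alpha)$-crowded at every earlier stage (for $q^+$, Lemma \ref{nocermax ideals} plus maximality replaces $\bigcup_i K_i$ by an open, hence crowded, $\bigcup_i F_i$ with $F_i\subseteq K_i$ and $x$ still in its closure), and the ``dovetailing'' worry is vacuous since each stage acts only when its hypothesis holds and the verification is entirely post hoc, quantifying solely over configurations of the final topology.
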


\begin{proof}
By Theorem \ref{homeomorphism} there is an $\mathcal{I}$-crowded
topology $\tau_0$ on $X$ such that $(X,\tau_0)$ is homeomorphic to
the rational numbers with their usual topology.

Now we fix an enumeration $\{\langle A^\alpha_i : i \in \omega
\rangle \}_{\alpha \in \mathfrak{c}}$ of all sequences of subsets
of $X$ and construct inductively an increasing sequence $\langle
\tau_\alpha : \alpha \leq \mathfrak{c}\rangle$ of topologies on
$X$ with the following properties:

\begin{enumerate}
\item Each $\tau_\alpha$ is zero-dimensional,
$\mathcal{I}$-crowded and $w(\tau_\alpha)\leq |\alpha|+\aleph_0 <
\mathfrak{c}$.

\item If $A^\alpha_0=\emptyset$ and $A^\alpha_1$ is
$(\mathcal{I},\tau_\alpha)$-crowded then $A^\alpha_1 \in
\tau_{\alpha+1}$.

\item If $A^\alpha_0=\emptyset$ and $A^\alpha_1$ is infinite but
not $(\mathcal{I},\tau_\alpha)$-crowded then $A^\alpha_1$ has
$\tau_{\alpha+1}$-isolated points.

\item If $A^\alpha_i$ is $\tau_\alpha$-dense for all $i \in
\omega$ and there is a sequence $\langle K_i\rangle_i \in
\prod_i[A^\alpha_i]^{<\omega}$ such that $\bigcup_{i \in
\omega}K_i$ is $(\mathcal{I},\tau)$-crowded and
$\tau_\alpha$-dense, then $X \setminus \bigcup_{i \in \omega}K_i$
is $\tau_{\alpha+1}$-discrete for some such $\langle
K_i\rangle_i$.

\item If $\langle A^\alpha_i\rangle_i$ is a sequence of pairwise
disjoint finite sets and there is an $S \subseteq \bigcup_i
A^\alpha_i$ which is $(\mathcal{I},\tau_\alpha)$-crowded,
$\tau_\alpha$-dense in $\bigcup_i A^\alpha_i$ and $|S \cap
A^\alpha_i|\leq 1$ for all $i\in \omega$, then
$cl_{\tau_\alpha}(S) \in \tau_{\alpha+1}$ and $cl_{\tau_\alpha}(S)
\setminus S$ is $\tau_{\alpha+1}$-discrete for some such $S$.
\end{enumerate}

Observe that at a limit ordinal $\lambda \leq \mathfrak{c}$ we can
just let $\tau_\lambda$ be the topology generated by
$\bigcup_{\alpha \in \lambda}\tau_\alpha$. Only condition 1 needs
to be checked but this is easy. What we do at a successor ordinal
$\alpha+1$ depends on which of the hypothesis on conditions 2, 3,
4 and 5 is satisfied by $\langle A^\alpha_i : i \in \omega
\rangle$ (note that these conditions are mutually exclusive and if
neither is satisfied we just let $\tau_{\alpha+1}=\tau_\alpha$):

\begin{itemize}
\item If $A^\alpha_0= \emptyset$ and $A^\alpha_1$ is
$(\mathcal{I},\tau_\alpha)$-crowded we let $\tau_{\alpha+1}$ be
the topology given by Lemma \ref{extension ideals} for
$\tau_\alpha$ and $A^\alpha_1$.

\item If $A^\alpha_0=\emptyset$ and $A^\alpha_1$ is infinite but
not $(\mathcal{I},\tau_\alpha)$-crowded then there exists $U \in
\tau_\alpha$ such that $A^\alpha_1 \cap U \in \mathcal{I}
\setminus \{\emptyset\}$. Fix $x \in A^\alpha_1 \cap U$ and let $A
= X \setminus (A^\alpha_1 \cap U) \cup \{x\}$. Note that for any
$V \in \tau_\alpha$ we have $A \cap V \supseteq V \setminus
(A^\alpha_1 \cap U) \in \mathcal{I}^+$ (since $\tau_\alpha$ is
$\mathcal{I}$-crowded) and hence $A$ is
$(\mathcal{I},\tau_\alpha)$-crowded. Thus we can let
$\tau_{\alpha+1}$ be the topology given by Corollary
\ref{extension ideals} for $\tau_\alpha$ and $A$. Now since $A$
and $U$ are both $\tau_{\alpha+1}$-open, we get that $x$ is
$\tau_{\alpha+1}$-isolated in $A^\alpha_1$.

\item If $A^\alpha_i$ is $\tau_\alpha$-dense for all $i \in
\omega$ and there is a sequence $\langle K_i\rangle_i \in
\prod_i[A^\alpha_i]^{<\omega}$ such that $\bigcup_{i \in
\omega}K_i$ is $(\mathcal{I},\tau)$-crowded and
$\tau_\alpha$-dense, we can let $\tau_{\alpha+1}$ be the topology
given by Lemma \ref{discrete ideals} for $\tau_\alpha$ and
$\bigcup_{i \in \omega}K_i$. Since $\bigcup_{i \in \omega}K_i$ is
$\tau_\alpha$-dense, we get that $X \setminus \bigcup_{i \in
\omega}K_i$ is $\tau_{\alpha+1}$-discrete.

\item If $\langle A^\alpha_i\rangle_i$ is a sequence of pairwise
disjoint finite sets and there is an $S \subseteq \bigcup_i
A^\alpha_i$ which is $(\mathcal{I},\tau_\alpha)$-crowded,
$\tau_\alpha$-dense in $\bigcup_i A^\alpha_i$ and $|S \cap
A^\alpha_i|\leq 1$ for all $i\in \omega$, then we can let
$\tau_{\alpha+1}$ be the topology given by Lemma \ref{discrete
ideals} for $\tau_\alpha$ and $S$. Then $cl_{\tau_\alpha}(S) \in
\tau_{\alpha+1}$ and $cl_{\tau_\alpha}(S) \setminus S$ is
$\tau_{\alpha+1}$-discrete.
\end{itemize}

Now we show that $\tau^\mathcal{I}=\tau_\mathfrak{c}$ has all the
properties that we want. It is immediate from the construction
that $\tau^\mathcal{I}$ is zero-dimensional and
$\mathcal{I}$-crowded.

If $A \subseteq X$ has no $\tau^\mathcal{I}$-isolated points, find
$\alpha \in \mathfrak{c}$ such that $A^\alpha_0=\emptyset$ and
$A^\alpha_1=A$. Since $\tau_{\alpha+1} \subseteq
\tau^\mathcal{I}$, $A$ has no $\tau_{\alpha+1}$-isolated points
and by condition 3 we conclude that $A$ is
$(\mathcal{I},\tau_\alpha)$-crowded and therefore, by condition 2,
$A \in \tau^\mathcal{I}$. This shows that $\tau^\mathcal{I}$ is
maximal (note that $\tau^\mathcal{I}$ is crowded by the first
observation on Remark \ref{remark on ideals}).

To prove (i) suppose that $\mathcal{I}$ is $p^+$ and fix a
decreasing sequence $\langle D_i : i \in \omega \rangle$ of
$\tau^\mathcal{I}$-dense subsets of $X$. Since dense subsets of a
maximal space are necessarily open we have that each $D_i$ is
$\tau^\mathcal{I}$-open and therefore
$(\mathcal{I},\tau^\mathcal{I})$-crowded. Thus if we take $\alpha
\in \mathfrak{c}$ such that $\langle A^\alpha_i : i \in \omega
\rangle = \langle D_i : i \in \omega \rangle$, we have that each
$A^\alpha_i$ is $(\mathcal{I},\tau_\alpha)$-crowded and
$\tau_\alpha$-dense. Thus Lemma \ref{SS ideals} tells us that
there exist finite sets $K_i \subseteq A^\alpha_i$ for $i \in
\omega$ such that $\bigcup_{i \in \omega}K_i$ is
$\tau_\alpha$-dense and $(\mathcal{I},\tau_\alpha)$-crowded. But
now condition 4 guarantees that $X \setminus \bigcup_{i \in
\omega}K_i$ is $\tau^\mathcal{I}$-discrete for some such $\langle
K_i \rangle_i$ and hence $\bigcup_{i \in \omega}K_i$ is dense,
showing that $\tau^\mathcal{I}$ is selectively separable.

Finally we prove (ii). Suppose that $\mathcal{I}$ is $q^+$, let
$\langle K_i : i \in \omega \rangle$ be a sequence of pairwise
disjoint finite subsets of $X$ and suppose that $x \in
\overbar{\bigcup_{i \in \omega}K_i} \setminus \bigcup_{i \in
\omega}K_i$ (here closures are taken with respect to
$\tau^\mathcal{I}$). Since $\tau^\mathcal{I}$ is maximal it
follows from Lemma \ref{nocermax ideals} that there are $F_i
\subseteq K_i$ for $i \in \omega$ such that $x \in
\overbar{\bigcup_{i \in \omega}F_i}$ and $\bigcup_{i \in
\omega}F_i$ is $\tau^\mathcal{I}$-open and hence
$(\mathcal{I},\tau^\mathcal{I})$-crowded. Find $\alpha \in
\mathfrak{c}$ such that $\langle A^\alpha_i : i \in \omega \rangle
= \langle F_i : i \in \omega \rangle$. Now we have that
$\bigcup_{i \in \omega}A^\alpha_i$ is
$(\mathcal{I},\tau_\alpha)$-crowded so Lemma \ref{q ideals} tells
us that there is an $(\mathcal{I},\tau_\alpha)$-crowded $S
\subseteq \bigcup_{i \in \omega}A^\alpha_i$ such that $|S \cap
A^\alpha_i| \leq 1$ for each $i \in \omega$ and $S$ is
$\tau_\alpha$-dense in $\bigcup_{i \in \omega}A^\alpha_i$. But now
condition 5 guarantees that $cl_{\tau_\alpha}(S) \in
\tau_{\alpha+1}$ and $cl_{\tau_\alpha}(S) \setminus S$ is
$\tau_{\alpha+1}$-discrete for some such $S$. Since
$\tau_{\alpha+1} \subseteq \tau^\mathcal{I}$, it follows that $x
\in cl_{\tau_\alpha}(\bigcup_{i \in \omega}A^\alpha_i) =
cl_{\tau_\alpha}(S) = \overbar{S}$.
\end{proof}

Next we want to apply this result to various specific ideals in
order to get maximal topologies satisfying all boolean
combinations of the properties $q^+$ and selective separability.
But first we need the following:

\begin{lemma}\label{denseopen}
If $(X,\tau)$ is a maximal $\mathcal{I}$-crowded space and $A \in
\mathcal{I}$ then $A$ is closed with empty interior (i.e. $X
\setminus A$ is dense open).
\end{lemma}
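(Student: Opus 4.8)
The plan is to establish the two assertions separately, using the two faces of maximality recorded in the introduction via van Douwen's characterization: that a maximal space is \emph{nodec} (every nowhere dense set is closed) and that every open set is \emph{irresolvable}. The empty-interior claim is immediate: if $U=\operatorname{int}(A)$ were nonempty, then $U\in\tau$ and $U\subseteq A\in\mathcal{I}$, so $U\in\mathcal{I}$ by hereditariness, contradicting $\tau\cap\mathcal{I}=\{\emptyset\}$; hence $\operatorname{int}(A)=\emptyset$. The real content is showing that $A$ is closed, and for this I would show the stronger fact that $A$ is nowhere dense and then invoke nodec.

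To prove that $A$ is nowhere dense, I would argue by contradiction, supposing $U=\operatorname{int}(\overline{A})\neq\emptyset$. Since $U\subseteq\overline{A}$, every point of $U$ lies in $\overline{A}$, so every nonempty open $V\subseteq U$ meets $A$; that is, $D:=A\cap U$ is dense in $U$. Now I invoke irresolvability of the open set $U$: because $U$ is open and $D\subseteq U$ is dense in $U$, the complement $U\setminus D$ is \emph{not} dense in $U$. Hence there is a nonempty open $W\subseteq U$ disjoint from $U\setminus D$, which forces $W\subseteq D\subseteq A$. But then $W$ is a nonempty open subset of $A\in\mathcal{I}$, so $W\in\mathcal{I}$ by hereditariness, again contradicting $\tau\cap\mathcal{I}=\{\emptyset\}$. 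Therefore $\operatorname{int}(\overline{A})=\emptyset$, i.e.\ $A$ is nowhere dense.

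Finally, since $A$ is nowhere dense and $X$ is maximal (hence nodec), $A$ is closed; together with $\operatorname{int}(A)=\emptyset$ this says exactly that $X\setminus A$ is dense and open.

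The step I expect to be the crux is the passage from ``$A$ has empty interior'' to ``$A$ is nowhere dense'', since in a general space these differ (a dense set like the rationals has empty interior but dense closure). The key observation is that $\mathcal{I}$-crowdedness forbids $A$ from being dense on any open set: irresolvability converts ``$A\cap U$ dense in $U$'' into an open subset of $U$ contained in $A$, which the $\mathcal{I}$-crowded hypothesis then rules out. This is the only place where more than hereditariness and $\mathcal{I}$-crowdedness is used, and it is precisely why maximality (through irresolvability and nodec) is needed rather than mere crowdedness.
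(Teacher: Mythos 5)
Your proof is correct, but it follows a different route from the paper's, which is essentially a two-liner: since $\tau\cap\mathcal{I}=\{\emptyset\}$ and $\mathcal{I}$ is hereditary, $A$ has empty interior, hence $X\setminus A$ is dense, and dense subsets of a maximal space are open (a standard fact about maximal topologies, also invoked without proof in the paper's main theorem), so $X\setminus A$ is open. You instead work directly with $A$: you upgrade ``empty interior'' to ``nowhere dense'' by applying irresolvability to the open set $U=\operatorname{int}(\overline{A})$ (extracting a nonempty open $W\subseteq A\cap U$, which $\mathcal{I}$-crowdedness forbids), and then conclude that $A$ is closed from the nodec property. In effect you have unpacked the paper's black box: applying your irresolvability-plus-nodec argument to $X\setminus D$ for a dense $D$ is exactly how one proves that dense sets are open in maximal spaces. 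What the paper's approach buys is brevity; what yours buys is self-containedness, since it uses only the two pieces of van Douwen's characterization quoted in the introduction and makes explicit where maximality (as opposed to mere crowdedness plus hereditariness) enters. Both proofs handle the empty-interior step identically.
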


\begin{proof}
Since $\tau \cap \mathcal{I}= \{\emptyset\}$ we have that $A$ has
empty interior. Thus $X \setminus A$ is dense and therefore open
since $\tau$ is maximal.
\end{proof}

\begin{theorem}[$\mathfrak{m}_c=\mathfrak{c}$]\label{maxssq}
There exists a maximal zero-dimensional space which is selectively
separable and $q^+$.
\end{theorem}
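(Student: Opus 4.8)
The plan is to deduce this immediately from Theorem \ref{main} by feeding it an ideal that is simultaneously hereditarily meager, $p^+$ and $q^+$; for such an $\mathcal{I}$ both clauses (i) and (ii) apply, so the topology $\tau^\mathcal{I}$ is at once maximal, zero-dimensional, selectively separable and $q^+$. The natural candidate is the Fr\'echet ideal $\mathcal{I}=[X]^{<\omega}$, for which $\mathcal{I}^+$ is exactly the collection of infinite subsets of $X$.

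First I would observe that $[X]^{<\omega}$ is a Borel (indeed $F_\sigma$) subset of $2^X$, hence hereditarily meager by the remark in the Preliminaries. Next I would verify the two combinatorial properties directly. For $p^+$: given a decreasing sequence $(A_n)_n$ of infinite sets, a diagonal choice $a_0\in A_0$ and $a_{n+1}\in A_{n+1}$ with $a_{n+1}>a_n$ produces an infinite $A=\{a_n:n\in\omega\}$ with $A\subseteq^* A_n$ for every $n$, so $A\in\mathcal{I}^+$ is the required pseudo-intersection. For $q^+$: if $A$ is infinite and $(F_n)_n$ partitions $A$ into finite sets, then infinitely many $F_n$ are nonempty, so choosing one point from each nonempty $F_n$ yields an infinite (hence $\mathcal{I}$-positive) selector $S\subseteq A$ with $|S\cap F_n|\leq 1$ for all $n$.

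With these three facts in hand, Theorem \ref{main} applied to $\mathcal{I}=[X]^{<\omega}$ yields a maximal, zero-dimensional, $[X]^{<\omega}$-crowded topology $\tau^\mathcal{I}$; by the first item of Remark \ref{remark on ideals}, being $[X]^{<\omega}$-crowded is the same as being crowded. Since $[X]^{<\omega}$ is $p^+$, clause (i) gives that $\tau^\mathcal{I}$ is selectively separable, and since it is $q^+$, clause (ii) gives that $\tau^\mathcal{I}$ is $q^+$. This is the desired space.

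There is essentially no technical obstacle here: all of the difficulty has already been absorbed into Theorem \ref{main}. The only idea required is to recognize that a single ideal can carry all three properties at once, and that the simplest possible one, the finite ideal, already does so; the verifications of $p^+$ and $q^+$ for $[X]^{<\omega}$ are the standard diagonal and selector arguments and present no real difficulty.
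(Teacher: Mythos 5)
Your proposal is correct and is essentially identical to the paper's own proof: the paper also takes $\mathcal{I}=[X]^{<\omega}$, notes it is $p^+$, $q^+$ and HM, and applies Theorem \ref{main}. The only difference is that you spell out the (standard) verifications that the paper dismisses as ``well known and easy to see.''
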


\begin{proof}
Let $\mathcal{I}=[X]^{<\omega}$. It is well known and easy to see
that $\mathcal{I}$ is $p^+$, $q^+$ and HM so Theorem \ref{main}
gives us a maximal zero-dimensional topology $\tau^\mathcal{I}$ on
$X$ which is selectively separable and $q^+$.
\end{proof}

\begin{theorem}[$\mathfrak{m}_c=\mathfrak{c}$]\label{maxss-q}
There exists a maximal zero-dimensional space which is selectively
separable but not $q^+$.
\end{theorem}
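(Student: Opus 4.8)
The goal is to produce a maximal zero-dimensional space that is selectively separable but not $q^+$. Theorem \ref{main} reduces this to finding a single hereditarily meager ideal $\mathcal{I}$ on $X$ that is $p^+$ (which yields SS) but not $q^+$ (which should force the resulting space to fail $q^+$). So the plan is twofold: first locate such an ideal, and second check that the failure of $q^+$ in the ideal actually propagates to a failure of $q^+$ in the constructed topology $\tau^{\mathcal{I}}$.

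For the ideal itself, the natural candidate is the one associated with Example \ref{clopen}. Recall that $CL(2^\omega)$ is $p^+$ and not $q^+$, and the subspace $X=\bigcup_{k\in\omega}A_{k+1}$ exhibits the failure of $q^+$ in a strong form: any $S\subseteq X$ meeting each $A_k$ in at most one point is closed. The plan is to let $\mathcal{I}=\mathcal{I}_z$ be the ideal of a suitable point $z$ witnessing this (or to work directly with $CL(2^\omega)$ and transfer the relevant structure), so that $\mathcal{I}$ inherits being $p^+$ and non-$q^+$; being a Borel (in fact $F_\sigma$ or analytic) ideal, it is automatically HM by the remark following the definition of HM. Since $\mathcal{I}$ is $p^+$, clause (i) of Theorem \ref{main} immediately gives that $\tau^{\mathcal{I}}$ is selectively separable, and the topology is maximal and zero-dimensional for free.

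The substantive part is verifying that $\tau^{\mathcal{I}}$ is \emph{not} $q^+$. The idea is to exploit the $\mathcal{I}$-crowdedness together with Lemma \ref{denseopen}: since $\tau^{\mathcal{I}}\cap\mathcal{I}=\{\emptyset\}$, every set in $\mathcal{I}$ is closed with dense-open complement, so the sets in $\mathcal{I}$ behave like ``small'' sets topologically. One wants to find a point $x$ and a partition $(G_n)_n$ of a set accumulating at $x$ into finite pieces so that no selector (meeting each $G_n$ in at most one point) can accumulate at $x$. The strategy is to pull back the partition $(A_{k+1})_k$ from Example \ref{clopen} into $X$ via the homeomorphism / identification used to define $\mathcal{I}$, arranging that a selector for this partition lands inside a member of $\mathcal{I}$; by Lemma \ref{denseopen} such a selector is then closed in $\tau^{\mathcal{I}}$ and hence cannot accumulate at any point outside it. Concretely, I would show that the partition into the $A_{k+1}$'s is an $(\mathcal{I},\tau^{\mathcal{I}})$-relevant partition witnessing that $q^+$ fails at an appropriate accumulation point.

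The main obstacle I anticipate is the interface between the combinatorial failure of $q^+$ in the ideal (a statement about selectors being $\mathcal{I}$-small or closed in the \emph{ambient} structure of Example \ref{clopen}) and the failure of $q^+$ in the freshly built maximal topology $\tau^{\mathcal{I}}$ (a statement about selectors failing to accumulate topologically). These are not literally the same, because $\tau^{\mathcal{I}}$ is constructed abstractly on $X$ and need not contain the clopen sets of $CL(2^\omega)$. The key technical point to nail down is therefore that the topology $\tau^{\mathcal{I}}$, being $\mathcal{I}$-crowded, forces any set that is a selector ``in the sense of $\mathcal{I}$'' to lie in $\mathcal{I}$ and thus—via Lemma \ref{denseopen}—to be topologically closed, so that it cannot witness $q^+$ at a limit point. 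Making this bridge precise, i.e.\ choosing the ideal so that selectors for the canonical finite partition are automatically $\mathcal{I}$-members, is the crux of the argument; once that is set up, the rest follows from the maximality and $\mathcal{I}$-crowdedness already guaranteed by Theorem \ref{main}.
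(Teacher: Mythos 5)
Your strategy is sound and, at the structural level, it coincides with the paper's own proof: get SS from clause (i) of Theorem \ref{main}, and defeat $q^+$ by choosing $\mathcal{I}$ so that there is a partition of the \emph{whole} set $X$ into finite pieces every selector of which lies in $\mathcal{I}$; then Lemma \ref{denseopen} makes each selector $\tau^{\mathcal{I}}$-closed, while for any point $x$ the union of the pieces minus $\{x\}$ is dense (the space is crowded and $T_1$), so $q^+$ fails at every point. The only real divergence is the choice of ideal, and the comparison is instructive. The paper fixes a partition $X=\bigcup_n F_n$ with $|F_n|=n$ and takes the explicit ideal $\mathcal{I}=\bigcup_m\bigcap_n\{A: |A\cap F_n|\leq m\}$, which is $F_\sigma$; this makes HM, $p^+$, and ``every selector lies in $\mathcal{I}$'' completely immediate. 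You instead propose $\mathcal{I}=\mathcal{I}_z$ for a point $z$ of the space $X=\bigcup_k A_{k+1}$ of Example \ref{clopen}. This does work, and the ``crux'' you flag closes exactly as you hope: by Example \ref{clopen} every selector $S$ for $(A_{k+1})_k$ is closed in the original topology, hence $S\in\mathcal{I}_z$ (apply the closedness to $S\setminus\{z\}$ when $z\in S$), and since the $A_{k+1}$'s are finite and cover $X$, the argument above applies verbatim. But your route carries verification costs that you only gesture at: (a) the $p^+$-ness of $\mathcal{I}_z$ is not free---you must invoke that $CL(2^\omega)$ is a $p^+$ space and check that this survives restriction to the dense subspace $X$ (easy: a positive pseudo-intersection can be intersected with the first term of the sequence at the cost of a finite set, and finite sets lie in the free ideal $\mathcal{I}_z$); (b) your claim that $\mathcal{I}_z$ is ``Borel (in fact $F_\sigma$)'' is unjustified---what is true is that $\mathcal{I}_z$ is \emph{analytic}, being a projection along the analytic topology of $CL(2^\omega)$, and analyticity suffices for HM; (c) there is no ``homeomorphism / identification'' to pull anything back through: Theorem \ref{main} builds $\tau^{\mathcal{I}}$ on the very same countable set on which $\mathcal{I}$ and the partition $(A_{k+1})_k$ live, so the partition is already in place. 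Finally, one point you leave implicit deserves emphasis, since it is the reason the paper's ideal is designed the way it is: mere non-$q^+$-ness of $\mathcal{I}$ would \emph{not} suffice, because an abstract witness partitions only some $\mathcal{I}$-positive set, which could fail to accumulate anywhere in the new topology $\tau^{\mathcal{I}}$; what saves both your argument and the paper's is that the witnessing partition covers all of $X$, so its union minus any point is automatically dense in $\tau^{\mathcal{I}}$.
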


\begin{proof}
Fix a partition of $X$ into finite sets $X=\bigcup_{n \in \omega}
F_n$ such that $|F_n|=n$. Let $\mathcal{I}=\bigcup_{m \in \omega}
\bigcap_{n \in \omega} \{A \subseteq X : |A \cap F_n| \leq m \}$.
Note that $\mathcal{I}$ is an $F_\sigma$ ideal and therefore it is
HM and $p^+$ so Theorem \ref{main} gives us a maximal
zero-dimensional topology $\tau^\mathcal{I}$ on $X$ which is
selectively separable. Now we show that $\tau^\mathcal{I}$ is not
$q^+$ at any point. Let $x \in X$ and note that $x \in
\overbar{\bigcup_{n \in \omega} G_n} \setminus \bigcup_{n \in
\omega} G_n$ where $G_n=F_n \setminus \{x\}$. However if $S
\subseteq \bigcup_{n \in \omega} G_n$ intersects each $G_n$ in at
most one point, we see that $S \in \mathcal{I}$ and by Lemma
\ref{denseopen} $S$ is closed. Thus $x \notin \overbar{S}$,
showing that $\tau^\mathcal{I}$ is not $q^+$ at $x$.
\end{proof}

\begin{theorem}[$\mathfrak{m}_c=\mathfrak{c}$]\label{maxq-ss}
There exists a maximal zero-dimensional space which is $q^+$ but
not selectively separable.
\end{theorem}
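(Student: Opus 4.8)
The plan is to apply Theorem \ref{main} to a concrete Borel ideal that is $q^+$ but all of whose columns already lie inside the ideal, so that Lemma \ref{denseopen} hands us a decreasing sequence of dense open sets witnessing the failure of selective separability. Concretely, I would take $X=\omega\times\omega$, write $C_i=\{i\}\times\omega$ for the $i$-th column, and let $\mathcal{I}=\mathrm{Fin}\times\mathrm{Fin}$ be the ideal of all $A\subseteq X$ such that $A\cap C_i$ is finite for all but finitely many $i$. This is plainly a proper free ideal, it is Borel and hence HM, and every finite union of columns belongs to $\mathcal{I}$. Granting that $\mathcal{I}$ is $q^+$, Theorem \ref{main} yields a maximal, zero-dimensional, $\mathcal{I}$-crowded topology $\tau^\mathcal{I}$ on $X$ which is $q^+$; it then remains only to check that $\tau^\mathcal{I}$ is not selectively separable.

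The main obstacle is verifying that $\mathcal{I}=\mathrm{Fin}\times\mathrm{Fin}$ is $q^+$, and this is where the combinatorics is used. Given $A\in\mathcal{I}^+$ the set $R=\{i:|A\cap C_i|=\infty\}$ is infinite, and given a partition $A=\bigcup_n F_n$ into finite sets I would record two facts: for each $i\in R$ the set $Q_i=\{n:F_n\cap C_i\neq\emptyset\}$ is infinite (since $A\cap C_i$ is infinite and the pieces are finite), while each index $n$ lies in only finitely many $Q_i$, because the finite piece $F_n$ meets only finitely many columns. This finite-to-infinite incidence lets me build, by a straightforward greedy recursion, pairwise disjoint infinite sets $\Pi_i\subseteq Q_i$ for $i\in R$: at each step only finitely many indices have been committed, so an unused element of the infinite $Q_i$ is always available. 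Selecting one point of $A\cap C_i\cap F_n$ for each $n\in\Pi_i$ produces $S\subseteq A$ with $|S\cap F_n|\le 1$ and with $S\cap C_i$ infinite for every $i\in R$; hence $S\in\mathcal{I}^+$, as required.

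Finally, to see that $\tau^\mathcal{I}$ fails selective separability I would exploit that the columns lie in $\mathcal{I}$. For each $i$ the set $\bigcup_{j<i}C_j$ belongs to $\mathcal{I}$, so by Lemma \ref{denseopen} its complement $A_i=X\setminus\bigcup_{j<i}C_j=\bigcup_{j\ge i}C_j$ is dense and open, and $A_0\supseteq A_1\supseteq\cdots$. I claim this sequence admits no dense finite selection. Indeed, if $K_i\subseteq A_i$ is finite for each $i$ and $B=\bigcup_i K_i$, then for a fixed column $C_j$ only $K_0,\dots,K_j$ can meet $C_j$ (as $K_i\subseteq A_i$ is disjoint from $C_j$ whenever $i>j$), so $B\cap C_j$ is finite for every $j$ and therefore $B\in\mathcal{I}$. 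By Lemma \ref{denseopen}, $B$ is then closed with empty interior, hence not dense. Thus $(A_i)_i$ is a sequence of dense sets with no dense finite selection, so $\tau^\mathcal{I}$ is not selectively separable, completing the proof.

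I expect the only genuinely non-routine point to be the $q^+$ verification of $\mathrm{Fin}\times\mathrm{Fin}$; the density of the tails $A_i$ and the containment $B\in\mathcal{I}$ are immediate once one observes that $\mathcal{I}$ swallows all finite unions of columns and that any finite selection drawn from the tails meets every column only finitely often.
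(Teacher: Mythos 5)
Your proposal is correct and follows essentially the same route as the paper: your ideal $\mathrm{Fin}\times\mathrm{Fin}$ on $\omega\times\omega$ is exactly the paper's ideal (sets whose intersection with all but finitely many blocks of a partition of $X$ into infinite sets is finite), and your non-SS argument --- the tails $\bigcup_{j\ge i}C_j$ are dense by Lemma \ref{denseopen}, while any finite selection from them lies in $\mathcal{I}$ and is therefore closed and non-dense --- is the paper's argument verbatim. The only difference is that you write out the $q^+$ verification of $\mathcal{I}$ (which the paper dismisses as ``easy to see''), and your greedy construction of the disjoint infinite sets $\Pi_i\subseteq Q_i$ is a correct way to do it.
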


\begin{proof}
Fix a partition of $X$ into infinite sets $X=\bigcup_{n \in
\omega} X_n$ and let $\mathcal{I}$ be the collection of all
subsets of $X$ whose intersection with all but finitely many
$X_n$'s is finite. It is easy to see that $\mathcal{I}$ is $q^+$
and it is also HM being a Borel ideal. Theorem \ref{main} gives us
a maximal zero-dimensional topology $\tau^\mathcal{I}$ on $X$
which is $q^+$. Now we show that $\tau^\mathcal{I}$ is not
selectively separable. For each $i \in \omega$ let $D_i=
\bigcup_{n \geq i} X_n$. Note that each $X \setminus D_i \in
\mathcal{I}$ so by Lemma \ref{denseopen} it is dense. However if
$F_i \subseteq D_i$ is finite for each $i \in \omega$, we have
that $\bigcup_{i \in \omega} F_i \in \mathcal{I}$ and using again
Lemma \ref{denseopen} we get that $\bigcup_{i \in \omega} F_i$ is
closed and therefore not dense.
\end{proof}

Suppose $\mathfrak{m}_c=\mathfrak{c}$. Let $\mathcal{I}_1$ and
$\mathcal{I}_2$ be the ideals on $X$ defined as in the proofs of
theorems \ref{maxss-q} and \ref{maxq-ss} respectively. Now Theorem
\ref{main} gives us maximal zero-dimensional topologies
$\tau^{\mathcal{I}_1}$ and $\tau^{\mathcal{I}_2}$ on $X$ such that
$\tau^{\mathcal{I}_1}$ is not $q^+$ and $\tau^{\mathcal{I}_2}$ is
not selectively separable. Taking the disjoint union of these two
spaces we obtain a space which is neither SS nor $q^+$.  We show
next that this can be proved without assuming that
$\mathfrak{m}_c=\mathfrak{c}$.

\begin{theorem}\label{max-q-ss}
There exists a maximal zero-dimensional space which is neither
$q^+$ nor selectively separable.
\end{theorem}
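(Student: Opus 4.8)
The plan is to keep the disjoint-union strategy of the $\mathrm{MA_{ctble}}$ paragraph above, but to supply its two summands by absolute (ZFC) constructions instead of by Theorem~\ref{main}. First I would record the two structural facts that make the reduction work. (1) The topological sum $Y_1\oplus Y_2$ of two countable maximal zero-dimensional spaces is again maximal and zero-dimensional: using van Douwen's characterization \cite{Vand}, extremal disconnectedness, the nodec property, and irresolvability of open sets are all checked summand by summand (closures, and the open set witnessing irresolvability, are computed inside each clopen summand), while a clopen base for the sum is the union of clopen bases of the summands. (2) Both negative properties descend from clopen summands: a clopen subspace of an SS space is SS (given dense sets of the summand, pad each with the complementary summand and apply SS of the sum), and being $q^+$ at a point $x$ depends only on a clopen neighborhood of $x$ (the witnessing finite sets and the candidate selectors may be taken inside that neighborhood, where the relevant closures are computed). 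Hence if $Y_2$ is not SS and $Y_1$ is not $q^+$ at some point, then $Y_1\oplus Y_2$ is a maximal zero-dimensional space that is neither SS nor $q^+$.

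It therefore suffices to produce, in ZFC, a maximal zero-dimensional space $Y_2$ that is not SS and a maximal zero-dimensional space $Y_1$ that is not $q^+$. For $Y_2$ I would invoke the ZFC maximal non-SS space of Barman and Dow \cite{BarmanDow2011} recalled in the introduction, using that a maximal regular space is automatically zero-dimensional (if $x\in U$ is open, regularity gives open $V$ with $x\in V\subseteq\overbar V\subseteq U$, and extremal disconnectedness makes $\overbar V$ clopen); alternatively $Y_2$ can be built as a maximal $\mathcal{I}_2$-crowded space by the method described below, its non-SS-ness then following exactly as in Theorem~\ref{maxq-ss} via Lemma~\ref{denseopen}. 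For $Y_1$ the crucial observation is that the verification of non-$q^+$ in Theorem~\ref{maxss-q} never used $\mathrm{MA_{ctble}}$: with $\mathcal{I}_1=\bigcup_m\bigcap_n\{A:|A\cap F_n|\le m\}$ as there, in \emph{any} maximal $\mathcal{I}_1$-crowded space every selector $S$ of $\langle F_n\setminus\{x\}\rangle_n$ lies in $\mathcal{I}_1$, hence is closed by Lemma~\ref{denseopen}, so $x\notin\overbar S$ while $x\in\overbar{\bigcup_n(F_n\setminus\{x\})}$ by crowdedness. Thus $Y_1$ reduces to the single existential claim that a maximal zero-dimensional $\mathcal{I}_1$-crowded space exists.

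The main obstacle is exactly this existential claim, and it is where the paper's technique genuinely uses $\mathrm{MA_{ctble}}$: the extension apparatus (Corollary~\ref{extension ideals}, Lemma~\ref{discrete ideals}) rests on the resolvability Lemma~\ref{resolvability ideals}, whose hypothesis $w(\tau)<\mathfrak{m}_c$ cannot simply be dropped, since crowded irresolvable---indeed maximal---spaces exist in ZFC. So one cannot just rerun the proof of Theorem~\ref{main} with $\mathcal{I}=\mathcal{I}_1$. Instead I would build the space directly by a van Douwen-style transfinite recursion \cite{Vand}: starting from the crowded zero-dimensional $\mathcal{I}_1$-crowded copy of $\mathbb{Q}$ on the fibred set $X=\bigcup_n F_n$ given by Theorem~\ref{homeomorphism} (note $\mathcal{I}_1$ is $F_\sigma$, hence meager, and $\pi w(\mathbb{Q})=\aleph_0<\mathfrak{m}_c$), I would enumerate $\mathcal{P}(X)$ and at each step force either a set or a suitable almost-complement into a growing clopen base kept inside $\mathcal{I}_1^+$, so that at the end every subset is clopen or nowhere dense and no member of $\mathcal{I}_1$ acquires nonempty interior. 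The delicate point---the step I expect to be hardest, and on which I would spend the most care---is to make these branching choices while simultaneously preserving zero-dimensionality and $\mathcal{I}_1$-crowdedness at every stage, replacing the MA-driven splitting of Lemma~\ref{resolvability ideals} by explicit bookkeeping that exploits the concrete fibre structure of $\mathcal{I}_1$.
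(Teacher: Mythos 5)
Your first two paragraphs are sound and match the paper's own reduction: the disjoint-union step, the descent of non-SS and non-$q^+$ to clopen summands, the use of the Barman--Dow ZFC maximal non-SS space for $Y_2$, and the observation that in any maximal $\mathcal{I}_1$-crowded space every selector of $\langle F_n\setminus\{x\}\rangle_n$ lies in $\mathcal{I}_1$ and is therefore closed by Lemma~\ref{denseopen}. But after these reductions your entire proof rests on the existential claim that a maximal zero-dimensional $\mathcal{I}_1$-crowded space exists \emph{in ZFC}, and this claim is never established; your third paragraph is a plan, not an argument. The difficulty you defer to ``explicit bookkeeping'' is precisely the step for which the paper needs $\mathfrak{m}_c=\mathfrak{c}$: in the recursion of Theorem~\ref{main}, when a crowded set $A$ satisfies $A\cap U\in\mathcal{I}\setminus\{\emptyset\}$ for some open $U$, one must pass to a \emph{finer} $\mathcal{I}$-crowded topology isolating a point of $A$ (condition 3 there), and this is done via Corollary~\ref{extension ideals}, i.e.\ via Lemma~\ref{resolvability ideals}, i.e.\ via MA for countable posets. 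No ZFC substitute is offered. Note also that $\mathcal{I}_1$-crowdedness is a property that refinements destroy (a finer topology may pick up members of $\mathcal{I}_1$ as open sets), so neither a Zorn-style argument (a topology maximal \emph{among} $\mathcal{I}_1$-crowded ones need not be a maximal topology) nor a black-box van Douwen refinement will keep it; with the concrete fibre structure of $\mathcal{I}_1$ in hand you would still have to reprove, without MA, something the authors themselves only know how to do with MA.

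The paper closes this gap by demanding far less of the construction. Instead of forcing the final maximal topology to avoid an ideal, it starts from the concrete space $X=\bigcup_k A_{k+1}$ of Example~\ref{clopen}, in which every selector is \emph{already closed} in the initial topology $\tau$. Closedness, unlike $\mathcal{I}$-crowdedness, is upward absolute: it persists under any refinement. So one may apply the ZFC result recorded in \cite[Lemma 2.19]{BarmanDow2011} (implicit in van Douwen \cite{Vand}), giving a finer regular topology $\tau'$ on $X$ and a dense subspace $Y$ of $(X,\tau')$ which is maximal; selectors of $\langle A_k\cap Y\rangle_k$ remain closed in $Y$, and since $Y$ is crowded, no point of $Y$ is $q^+$. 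The disjoint union of $Y$ with the Barman--Dow space $Z$ then finishes the proof. If you replace your transfinite recursion by this ``refinement-stable witness plus van Douwen refinement'' argument, your proof goes through; as written, it has a genuine hole exactly where you predicted it would be hardest.
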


\begin{proof}
Barman and Dow \cite{BarmanDow2011} have shown that there is a
maximal non SS space $Z$. We use the same idea in their proof to
show that there is a maximal non $q^+$ regular space. Let
$(X,\tau)$ be the space given in Example \ref{clopen}. Recall that
$X=\bigcup_k A_k$ where each $A_k$ is finite and every selector is
closed in $X$.  It was observed in \cite[Lemma
2.19]{BarmanDow2011} that van Douwen \cite{Vand} implicitly showed
that there is a regular topology $\tau'$ on $X$  finer than
$\tau$  and a dense subspace $Y$ of $(X,\tau')$ which is maximal.
Then $Y=\bigcup_k A_k\cap Y$ and this decomposition shows that $Y$
is not $q^+$. Finally, the disjoint union of $Z$ and $Y$ is the
required maximal space.
\end{proof}

It is consistent with ZFC that no countable maximal space is SS
\cite{BarmanDow2011,Reposvetal2010}. The same happens with the
$q^+$ property as we show next.

\begin{theorem}\label{nonq}
It is consistent that there are no maximal $q^+$ spaces.
\end{theorem}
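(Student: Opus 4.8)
The plan is to reduce the statement to the classical consistency result that there need not exist any $Q$-points, and to manufacture a $Q$-point out of any maximal $q^+$ space. Recall that our spaces are countable and that, by the definition in the preliminaries, $X$ is $q^+$ at $x$ exactly when the ideal $\mathcal{I}_x=\{A\subseteq X: x\notin\overline{A\setminus\{x\}}\}$ is $q^+$. The key lemma I would isolate is: \emph{in any maximal space $X$ and every $x\in X$, the ideal $\mathcal{I}_x$ is a maximal (prime) ideal}; equivalently, the trace of the neighborhood filter $\mathcal{U}_x=\mathcal{I}_x^+=\{A\subseteq X: x\in\overline{A\setminus\{x\}}\}$ is a free ultrafilter on the countably infinite set $X\setminus\{x\}$. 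This primeness is the main obstacle; once it is in hand the rest is short.

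To prove the lemma I would invoke van Douwen's characterization quoted above: a maximal space is extremally disconnected, nodec and crowded. Fix $x$ and $A\subseteq X$ and set $R_1=\overline{\mathrm{int}\,\overline{A}}$ and $R_2=\overline{\mathrm{int}\,\overline{X\setminus A}}$. By extremal disconnectedness these are clopen, and since their interiors are disjoint open sets with dense union one gets $X=R_1\sqcup R_2$. Say $x\in R_1$. On one hand $A$ is dense in $R_1$, so (crowdedness) $x\in\overline{A\setminus\{x\}}$, i.e. $A\in\mathcal{I}_x^+$. On the other hand, writing $N=(X\setminus A)\setminus\mathrm{int}\,\overline{X\setminus A}$, the set $N$ is contained in the boundary of the closed set $\overline{X\setminus A}$, hence nowhere dense; since the space is nodec, every subset of a nowhere dense set is closed, so $N$ is closed and \emph{discrete}, and $\overline{X\setminus A}=R_2\cup N$. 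As $x\in R_1$ avoids the clopen set $R_2$ and is at worst an isolated point of the discrete set $N$, one finds a neighborhood $V$ of $x$ with $V\cap(X\setminus A)\subseteq\{x\}$; thus $x\notin\overline{(X\setminus A)\setminus\{x\}}$ and $X\setminus A\in\mathcal{I}_x$. The case $x\in R_2$ is symmetric, so exactly one of $A$, $X\setminus A$ lies in $\mathcal{I}_x$, which is precisely primeness. (Freeness is immediate from $T_1$ together with crowdedness.)

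With the lemma established I would finish as follows. Assume $X$ is a maximal $q^+$ space; it is countable and crowded, so $Y:=X\setminus\{x\}$ is countably infinite and $\mathcal{U}_x$ is a free ultrafilter on $Y$. Given any partition $Y=\bigcup_n F_n$ into finite sets, $x\in\overline{Y}$ forces, by $q^+$ at $x$, a selector $S\subseteq Y$ with $|S\cap F_n|\leq 1$ for all $n$ and $x\in\overline{S}$; the latter says exactly $S\in\mathcal{I}_x^+=\mathcal{U}_x$. Hence every finite-to-one partition of $Y$ admits a selector in $\mathcal{U}_x$, i.e. $\mathcal{U}_x$ is a $Q$-point. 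Therefore the existence of a maximal $q^+$ space implies the existence of a $Q$-point on a countable set. Since Miller showed that it is consistent that there are no $Q$-points (indeed no rapid filters, as happens in the Laver model), in any such model there is no maximal $q^+$ space, which is the desired consistency statement.
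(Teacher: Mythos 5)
Your overall route is exactly the paper's: for a maximal space $X$ and a point $x$, the family $\mathcal{U}_x=\{A\subseteq X:\; x\in \overline{A\setminus\{x\}}\}$ is a free ultrafilter; $q^+$ at $x$ applied to a partition of $X\setminus\{x\}$ into finite sets produces a selector lying in $\mathcal{U}_x$, so $\mathcal{U}_x$ is a Q-point; and Miller's theorem (no Q-points in the Laver model) yields the consistency statement. The only difference is that the paper simply cites van Douwen for the fact that $\mathcal{U}_x$ is an ultrafilter, whereas you prove it from scratch; your Q-point step and the appeal to Miller coincide with the paper's.

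Your self-contained proof of the ultrafilter lemma is essentially sound, but one step is not justified by the properties you grant yourself. You quote van Douwen's characterization as ``extremally disconnected, nodec and crowded,'' dropping the clause that \emph{every open set is irresolvable}, and then assert that $\mathrm{int}\,\overline{A}$ and $\mathrm{int}\,\overline{X\setminus A}$ are disjoint. That disjointness is precisely where the dropped clause is needed: if a nonempty open $U$ were contained in $\overline{A}\cap\overline{X\setminus A}$, then $A\cap U$ and $U\setminus A$ would both be dense in $U$, i.e.\ $U$ would be resolvable. Extremal disconnectedness, nodec and crowdedness alone do not give this. Once the irresolvability clause is restored (it is part of the characterization as quoted in the paper), the rest of your argument is correct: disjoint open sets have disjoint clopen closures by extremal disconnectedness, giving the partition $X=R_1\sqcup R_2$; the set $N=(X\setminus A)\setminus \mathrm{int}\,\overline{X\setminus A}$ is nowhere dense, hence closed and discrete by nodec, so a point of $R_1$ has a neighborhood missing $(X\setminus A)\setminus\{x\}$; and density of $A$ in $R_1$ plus crowdedness and $T_1$ give $A\in\mathcal{U}_x$. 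So the proposal is correct modulo this one citation slip, and what it buys over the paper's version is only self-containedness: the paper's proof is three lines because it outsources the ultrafilter fact to van Douwen.
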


\begin{proof}
Let $X$ be a countable maximal space and $x\in X$. Then
$\mathcal{U}_x=\{A\subseteq X:\; x\in \overline{A\setminus
\{x\}}\}$ is an ultrafilter (see \cite{Vand}). If $X$ is a $q^+$
space, then  $\mathcal{U}_x$ is a Q-point. Since it is consistent
that there are no Q-points (see \cite{Miller}),  we are done.
\end{proof}

\noindent {\bf Acknowledgments:}  The third author thanks La
Vicerrector\'ia de Investigaci\'on y Extensi\'on de la Universidad
Industrial de Santander for the financial support for this work,
which is part  of the VIE project  \#2422.

\bibliographystyle{plain}

\begin{thebibliography}{9}

\bibitem{BarmanDow2011}
D.~Barman and A.~Dow.
\newblock Selective separability and {$SS^+$}.
\newblock {\em Topology Proc.}, 37:181--204, 2011.

\bibitem{BarmanDow2012}
D.~Barman and A.~Dow.
\newblock Proper forcing axiom and selective separability.
\newblock {\em Top. and its Appl.}, 159(3):806 -- 813, 2012.

\bibitem{Bella2009}
A.~Bella, M.~Bonanzinga, and M.~Matveev.
\newblock Variations of selective separability.
\newblock {\em  Top. and its Appl.}, 156(7):1241 -- 1252, 2009.

\bibitem{Bella_et_al2008}
A.~Bella, M.~Bonanzinga, M.~Matveev, and V.~Tkachuk.
\newblock Selective separability: general facts and behavior in countable
  spaces.
\newblock {\em Topology Proceedings}, 32:15--30, 2008.

\bibitem{Bella2013}
A. Bella.
\newblock When is a {P}ixley-{R}oy hyperspace {$SS^+$}?
\newblock {\em  Top. and its Appl.}, 160(1):99 -- 104, 2013.

\bibitem{CamargoUzca2018b}
J.~Camargo and C.~Uzc\'ategui.
\newblock Selective separability on spaces with an analytic topology.
\newblock {\em Top. and its Appl.}, 248(1):176--191, 2018.

\bibitem{Cancinoetal}
Jonathan Cancino-Manr\'iquez, Michael Hru\v{s}\'ak, and David Meza-Alc\'antara.
\newblock Countable irresolvable spaces and cardinal invariants.
\newblock {\em Topology Proc.}, 44:189--196, 2014.

\bibitem{DTTW2002}
A.~Dow, M.~G. Tkachenko, V.~V. Tkachuk, and R.~G. Wilson.
\newblock Topologies generated by discrete subspaces.
\newblock {\em Glas. Math. Ser. III}, 37(57):187--210, 2002.

\bibitem{Gruenhage2011}
G. Gruenhage and M. Sakai.
\newblock Selective separability and its variations.
\newblock {\em Top. and its Appl.}, 158(12):1352 -- 1359, 2011.


\bibitem{Miller}
A. Miller.
\newblock There are no Q-points in Laver's model for the Borel conjecture.
\newblock {\em Proc. Amer. Math. Soc.}, 78(1):103--106, 1980.

\bibitem{MurgasUzca2019}
J.~Murgas and C.~Uzc\'ategui.
\newblock Combinatorial properties on nodec countable spaces with analytic topology.  doi.org/10.1016/j.topol.2020.107066.

\bibitem{Murtinova2006}
E.~Murtinova.
\newblock On products of discretely generated spaces.
\newblock {\em Top. and its Appl.}, 153(18):3402--3408, 2006.

\bibitem{Scheeper99}
M.~Scheepers.
\newblock Combinatorics of open covers {VI}: Selectors for sequences of dense sets.
\newblock {\em Quaestiones Mathematicae}, 22(1):109--130, 1999.

\bibitem{Solecki}
S. Solecki.
\newblock Analytic ideals and their applications.
\newblock {\em Ann. Pure Appl. Logic}, 99:51--72, 1999.

\bibitem{Reposvetal2010}
D. Repov\v{s} and L. Zdomskyy.
\newblock On {$M$}-separability of countable spaces and function spaces.
\newblock {\em Top. and its Appl.}, 157(16):2538--2541, 2010.

\bibitem{todortopics}
S.~Todor\v{c}evi\'c.
\newblock Topics in Topology.
\newblock {\em Lect. Notes Monogr.}, vol. 1652, Springer, 1997.

\bibitem{todoruzca}
S.~Todor\v{c}evi\'c and C.~Uzc\'ategui.
\newblock Analytic topologies over countable sets.
\newblock {\em Top. and its Appl.}, 111(3):299--326, 2001.

\bibitem{Todoruzca2000}
S.~Todor\v{c}evi\'c and C.~Uzc\'ategui.
\newblock Analytic $k$-spaces.
\newblock {\em Top. and its Appl.}, 146-147:511--526, 2005.

\bibitem{Todoruzca2014}
S.~Todor\v{c}evi\'c and C.~Uzc\'ategui.
\newblock A nodec regular analytic space.
\newblock {\em Top. and its Appl.}, 166:85--91, 2014.

\bibitem{Vand}
E. Van Douwen.
\newblock Applications of maximal topologies.
\newblock {\em Top. and its Appl.}, 51:125-139, 1993.

\end{thebibliography}

\end{document}